
\documentclass[12pt]{amsart}
\usepackage[all]{xy}
\usepackage{amsfonts}
\usepackage{amsthm, amsmath, amssymb}
\usepackage{array}
\usepackage{euscript}
\usepackage{epsfig, psfrag, epic}
\usepackage{graphicx}
\usepackage{xy}
\usepackage{multirow}
\usepackage{subfigure}

\textheight=20cm
\textwidth=13.5cm
\hoffset=-1cm
\parindent=16pt 

%

\title{Plumbers' Knots} 

%

\author{Chad Giusti}
\address{Warren Center for Network and Data Sciences\\University of Pennsylvania}
\email{cgiusti@seas.upenn.edu}


%


%
%

%

\newtheorem{thm}{Theorem}[section]
\newtheorem{cor}[thm]{Corollary}
\newtheorem{lem}[thm]{Lemma}
\newtheorem{prop}[thm]{Proposition}

\newtheorem{rmk}[thm]{Remark}
\newtheorem{defn}[thm]{Definition}

\newcommand{\into}{\hookrightarrow}
\newcommand{\dirlim}{\underrightarrow\lim}

\newcommand{\R}{{\mathbb R}}
\newcommand{\Z}{{\mathbb Z}}
\newcommand{\I}{{\mathbb I}}
\newcommand{\N}{{\mathbb N}}

\newcommand{\x}{{\mathbf x}}
\newcommand{\y}{{\mathbf y}}
\newcommand{\z}{{\mathbf z}}

\newcommand{\iI}{{\sc{Int}\;\I^3}}
\newcommand{\Cell}{\textrm{C}}

\def\co{\colon\thinspace}


\begin{document}
\baselineskip=17pt 
\begin{abstract}
We construct a new type of finite-complexity knot theory, the theory of plumbers' knots, which models classical knot theory and carries an intrinsic combinatorial cell structure for each fixed level of complexity. Utilizing these cell structures, describe algorithmic solutions to the problems of distinguishing and enumerating such knots.
\end{abstract}

\maketitle


In this paper, we introduce a new flavor of finite-complexity knot theory. The curves we consider, called \emph{plumbers' curves}, are a family of PL curves in $\I^3 \subset \mathbb{R}^3$ whose segments run parallel to the coordinate axes in the fixed order $\{x, y, z, x, y,\dots, z\}$. By focusing on such rigid curves, we maintain a thorough geometric understanding of the spaces without sacrificing the theoretical power of more general models.

To wit, the collection of all such curves decomposes into a directed system of spaces $P_n$ of \emph{plumbers' curves of $n$ moves}, each of which is homeomorphic to a Euclidean cube. Inside of each $P_n$ lies the subspace of \emph{plumbers' knots of $n$ moves}, $K_n$, whose weak homotopy type we show converges to that of the space of $C^1$ long knots. As such, these spaces provide us with a model for classical knot theory.

In addition, we show that each plumbers' curve space $P_n$ admits a cellular decomposition, and the induced cell complex can be canonically described through elementary combinatorics. Within this cell complex we identify a subcomplex for the discriminant, $S_n = P_n \setminus K_n$, providing an explicit (albeit still complicated) description of the geometry of the discriminant and how it evolves through the directed system. Other finite-complexity knot spaces do not seem to carry such structure, which make plumbers' knots very attractive for use in studying the Vassiliev spectral sequence, a program which we continue in later work. 

Understanding these decompositions allows us to construct a deterministic finite-time algorithm that enumerates the components of $K_n$. An implementation of this algorithm has demonstrated that there are seven knot types in $K_5$, forty-nine in $K_6$ and one thousand and eight in $K_7$ The seven components of $K_5$ correspond, topologically, to the unknot, three right-handed trefoils and three left-handed trefoils. This phenomenon of ``stuck knots", knots which are not isotopic at a level of finite complexity but whose topological isotopy classes coincide, was observed for PL knots by Calvo \cite{Calvo}, and in a strong sense lies at the heart of the problem of understanding these spaces.

We also note that plumbers' knots bear strong a resemblance to both lattice knots as considered, for example, in \cite{vanRensProm}, and the cube diagrams studied in \cite{Cube}. We observe that a plumbers' knot can be viewed as an approximation of a particular lattice knot, and put bounds on number of the classes of lattice knots that arise in each $P_n$. The similarity to the cube diagrams of Baldridge and Lowrance \cite{Cube} suggests that the tools we develop for plumbers' knots could be useful in the study of knot Floer homology.

The author would like to thank his advisor, Dev Sinha, for the countless hours of conversation and support without which this research would not have been possible.

\tableofcontents

\section{Plumbers' curves}
We begin by introducing plumbers' curves and the terminiology we will use to study them. 

Fix the standard orthonormal basis $\{x, y, z\}$ for $\R^3$. Let $\I^3$ be the cube $[0,1]^3\subseteq\mathbb{R}^3$. Given a point $v\in (\iI)^{n-1}$, we construct a map $\phi_{v}\co\R \to \R^3$ with support on the interval $[0, 1]$ which we call a {\em plumbers' curve}. 

\begin{defn}
Let $v=(v_1,\dots,v_{n-1}) \in (\iI)^{n-1}$, and by convention write $v_0 = (0, 0, 0)$ and $v_n = (1, 1, 1)$. For each $i\in\{0,\dots ,n-1\}$, define a sequence of three linear maps $\x_i^v(t), \y_i^v(t)$ and $\z_i^v(t)$ which interpolate between $v_{i}$ and $v_{i+1}$ by traveling continuously, in that order, parallel to the coordinate axes. Explicitly, define $\x_i^v(t) \co \left[\frac{i}{n}, \frac{i + 1}{n}\right] \to \R^{3}$ by 
\begin{align*}
\x_i^v(t) = \left((i+ 1 - nt){v}_i^x + (nt - i)  {v}_{i+1}^x,  {v}_i^y,  {v}_i^z\right)
\end{align*}
and $\y_i^v(t)$ and $\z_i^v(t)$ analogously. When the point $v$ is clear from context, we will suppress it from notation. 
We call each such map a \emph{pipe}, and each triple of consecutive pipes $(\x_i(t), \y_i(t), \z_i(t))$ for a given $i$ is a \emph{move}. Say the \emph{length} of $\x_i$ is $||\x_i|| = |\x_i(\frac{i}{n}) - \x_i(\frac{i+1}{n})|$, and its \emph{direction}  is
\begin{eqnarray*}
s(\x_i) = \left\{\begin{array}{ll}
 1& \x_i(\frac{i}{n}) > \x_i(\frac{i+1}{n})\\
 0& \x_i(\frac{i}{n}) = \x_i(\frac{i+1}{n})\\
 -1& \x_i(\frac{i}{n}) < \x_i(\frac{i+1}{n}) .
\end{array}\right .
\end{eqnarray*} 
Again, make appropriate modifications to apply these definitions to $\y_i$ and $\z_i$.

Let $\phi_{v}(t) \co [0,1] \to \I^{3}$ be the union of these maps. We call $\phi_{v}$ an \emph{$n$-move plumbers' curve}. (See Figure \ref{fig:aknot} for an example.)
\end{defn}

\begin{figure} 
\begin{center}
\includegraphics[width=7cm]{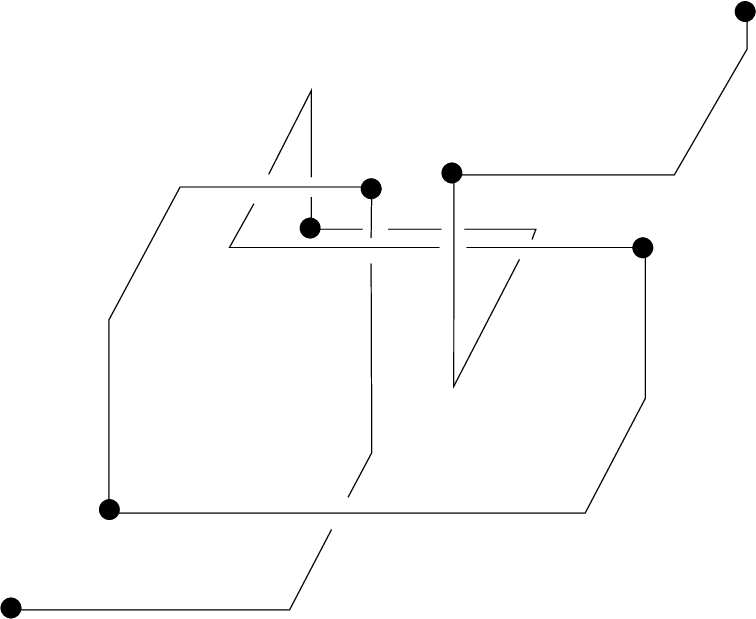}
\caption{A plumbers' curve of 6 moves.}
\label{fig:aknot}
\end{center}
\end{figure}

These maps encode piecewise linear motion parallel to the $x$-, $y$- and $z$-axes. A generic curve alternates these three directions of motion in the order $x, y, z, x,\dots,z$, however there is no restriction from pipes having length zero. Indeed, allowing such ``pauses'' and even allowing collections of vertices to coincide will be vital to our construction of a cell structure and a coherent directed system of the spaces of all such curves

\begin{defn}
Denote by $P_n$ the space of $n$-move plumbers maps, 
\begin{equation*}
P_n = \{\phi_v \;|\; v \in (\iI)^{n-1}\}.
\end{equation*}
\end{defn}

Observe that, up to reparametrization, any PL map with such an alternating image is produced by such a construction, so $P_n$ is a deformation retract of the collection of all such maps.

\subsection{Plumbers' knots}

There are two types of finite codimension singularities for classical $C^1$ curves: families of points at which the derivative vanishes and multiple points in the image. The rigid nature of plumbers' curves allows us to collapse all types of singularities to special cases of a particular condition the pipes, which we will readily be able to detect.

\begin{defn}  We say two pipes of a plumbers' curve are \emph{distant} if they are separated by at least three intervening pipes. 
\end{defn}

For example, $\x_i$ and $\x_{i+1}$ are not distant, but $\x_i$ and $\y_{i+1}$ are. 

\begin{defn}\label{def:singular}
A plumbers' curve is \emph{singular} if the images of any pair of its distant pipes intersect, and is \emph{non-singular} if not. 
\end{defn}

When discussing singularity of curves, we will sometimes abuse terminology by talking about pipes intersecting, rather than their images.

In Theorem \ref{thm:htpytypecorrect}, we will show that the weak homotopy type of the spaces of non-singular curves converges in the limit to that of the space of $C^1$ long knots, which will give us a rigorous justification for this definition. In the interim, to see that the intersection of distant pipes is an intuitively reasonable notion of singularity for plumbers' curves, we compare to the classical case. 

At a point with vanishing derivative in a $C^1$ curve, the $x$-, $y$- and $z$-partial derivatives are simultaneously zero. If we construct a plumbers' map with three consecutive coordinate equalities, for example $v_i^y = v_{i+1}^y$, $v_i^z = v_{i+1}^z$, $v_{i+1}^x = v_{i+2}^x$, then we can see that image of the ``head'' of the pipe $\x_i(t)$ is coincident with the images of the three zero-length pipes $\y_{i}(t)$, $\z_{i}(t)$ and $\x_{i+1}(t)$ that follow, and thus with the ``tail'' of the pipe $\y_{i+1}(t)$, which is distant. Any shorter sequence of consecutive zero-length pipes does not create an intersection of distant pipes, and intuitively corresponds to a nonsingular curve which is moving in a coordinate plane or parallel to a coordinate axis.

The other type of singularity, a multiple point in the image of the map, clearly corresponds to an intersection of of distant pipes, as well. 

\begin{rmk} \label{rmk:overrun} In the plumbers' curve setting, it may appear that there are other types of finite-condimension singularities not considered under Definition \ref{def:singular}. Suppose, for example, a plumbers' curve has a pair of parallel pipes whose images coincide along some interval in their domain. However, we observe that any such intersection has the property that at least one endpoint of one of the coincident pipes is contained in the image of the other pipe. Thus, there is also an intersection of non-parallel, necessarily distant pipes which occurs in addition to the originally observed ``overrun'' singularity.

For example, if the image of $\x_i(t)$ lies entirely in the image of $\x_j(t)$, then the image of $\z_{i-1}(t)$ and that of $\y_{i}(t)$ will also intersect the image of $\x_{j}(t)$. Further, at least one of those pipes must be distant to $\x_j(t)$, so this map is singular. 
\end{rmk} 

Now, we are prepared to define our primary objects of study: spaces of plumbers' knots and their discriminants. 

\begin{defn}
A non-singular plumbers' curve of $n$ moves is a \emph{plumbers' knot}, and the space of all such is $K_n \subseteq P_n$. The \emph{discriminant} $S_n = P_n \setminus K_n$ is the subspace of singular plumbers' curves. 
\end{defn}

In order to properly compare this theroy in classical knot theory, which deals with isotopy classes of non-singular $C^1$ curves, we require a notion of isotopy for plumbers' knots.

\begin{defn}
Let $\phi_v, \phi_w\in K_n$. We say $\phi_v$ and $\phi_w$ are \emph{geometrically isotopic} if there is a path $\Phi_{v, w} \co \I \to K_n$ with $\Phi_{v,w}(0) = v$ and $\Phi_{v,w}(1) = w$.
\end{defn}

Later, we will see that the notion geometric isotopy in any $K_n$ is stronger than the usual topological notion of knot isotopy. However, the two notions converge as we increase the articulation of plumbers' knots. The fact that geometric isotopies preserve the number of moves in a knot will be the fundamental component in our solutions of the problems of enumerating and distinguishing plumbers' knots.

\section{A cell complex for $S_n$}

The spaces $P_n$ possess an intrinsic combinatorial cell structure which is compatable with the partition into subspaces $S_n$ and $K_n$. The cell structure arises by collecting plumbers' curves $\phi_v$, into families based on the ordering of their vertices when projected onto the $x$, $y$ and $z$ axes. 

Let $\Sigma_n$ be the symmetric group on $n$ letters. It will sometimes be convenient to represent $\sigma\in\Sigma_n$ in \emph{permutation} notation as a string of integers, $\sigma(1)\sigma(2)\dots\sigma(n)$. By convention, we will use $\tau$ exclusively to refer to transpositions, while other symbols may represent any permutation.

Throughout, most of our constructions and results will be identical for $x$, $y$ and $z$ coordinates. To simplify statements, we will usually only explicitly describe the $x$ coordinate version.

\begin{defn} Let $\sigma \in \Sigma_{n-1}$. We say $\phi_v \in P_n$ \emph{respects $\sigma$ in x} if 
\begin{equation}
\label{eqn:respectsigma}
0 < v^x_{\sigma_i(1)} < v^x_{\sigma_i(2)} < \cdots < v^x_{\sigma_i(n-1)} < 1
\end{equation}
\end{defn}

Let $\sigma_x, \sigma_y, \sigma_z\in \Sigma_{n-1}$. Define an open set $e(\sigma_x, \sigma_y, \sigma_z) \subset P_n$ by 
\begin{eqnarray*}
e(\sigma_x, \sigma_y, \sigma_z)&=&\{\phi_v\in P_n \;|\;\phi_v\textrm{ respects }\sigma_x\textrm{  in }x, \sigma_y\textrm{  in }y\textrm{  and }\sigma_z\textrm{  in }z\}
\end{eqnarray*}
and denote by $\bar{e}(\sigma_x, \sigma_y, \sigma_z)$ its closure. These sets $\bar{e}(\sigma_x, \sigma_y, \sigma_z)$ are by definition products of three closed $(n-1)$-simplices. More specifically,  the family $\{\bar{e}(\sigma_x, \sigma_y, \sigma_z)\; |\; \sigma_x,\sigma_y, \sigma_z \in \Sigma_{n-1}\}$ is precisely the product of three copies of the standard decomposition of the cube $\I^{n-1}$  by (n-1)-simplices. Thus, these regions induce a cell structure on $P_n$ whose elements are products of the open simplices of various dimensions arising in the standard decomposition of $\I^{n-1}$. 

\begin{defn}
Let $\Cell_\bullet(P_n)$ be the chain complex associated to the cell structure induced by the collection $\{e(\sigma_x, \sigma_y, \sigma_z) \;|\; \sigma_x, \sigma_y, \sigma_z \in \Sigma_{n-1}\}$. By abuse, we write $e(\sigma_x, \sigma_y, \sigma_z)$ for elements of $\Cell_{3n-3}(P_n)$ as well as the corresponding subsets of $P_n$.
\end{defn}

It is clear from the geometry of the complex that the codimension of a cell is precisely the number of coordinate equalities in the points $v$ which compose the cell, and that all of the points in a given cell share the same collection of coordinate equalities and inequalities.

Each top-dimensional cell described above is clearly non-empty. By distinguishing a representative map from each cell, we can move toward ``discretizing'' this theory:

\begin{defn} \label{def:rep_knot} Let  $e=e(\sigma_x, \sigma_y, \sigma_z) \in \Cell_{3n-3}(P_n)$. The \emph{representative plumbers' knot for $e$} is the $n$-move plumbers knot $\phi_{v(e)}$ where $v(e)$ is given by
\begin{displaymath}
v(e)_i=\left(\frac{\sigma^{-1}_x(i)}{n}, \frac{\sigma^{-1}_y(i)}{n}, \frac{\sigma^{-1}_z(i)}{n}\right).
\end{displaymath}

\end{defn}

Note that every curve described in Definition \ref{def:rep_knot} is, indeed, non-singular because no pair of distant pipes in the image can be coplanar due to coordinate inequalities.

Clearly, any plumbers' knot in the closure of a top dimensional cell is geometrically isotopic to the representative knot for that cell via a straight line geometric isotopy. Thus, to study geometric isotopy types of $n$-move plumbers' knots it suffices to study only the classes representative knots of top-dimensional cells.  

\begin{lem} \label{lem:singinface} If a plumbers' curve $\phi_v$ is singular, then there exists a codimension one cell $e \in \Cell_{3n-4}(P_n)$ so that $\phi_v \in \bar{e}$. Further, suppose $e' \in \Cell_\bullet(P_n)$ and $\phi_v \in e'$. If $\phi_v$ is singular, then so is every $\phi_{v'} \in \bar{e'}$.
\end{lem}
\begin{proof}
If $\phi_v$ is singular, by definition it must have a pair of distant pipes which intersect. As noted in Remark \ref{rmk:overrun}, we can assume that these are, in fact, non-parallel pipes. It is easy to characterize plumbers' curves for which a given pair of non-parallel pipes intersect: (here and elsewhere, we write ``$(p - r)(q - r) \geq 0$" in place of ``neither $q < r < p$ nor $q < r < p$")\\
$\x_i \cap \y_j$ if 
\begin{enumerate}
\item $(v_i^x - v_{j+1}^x)(v_{i+1}^x - v_{j+1}^x) \geq 0$,
\item $(v_j^y-v_{i}^y)(v_{j+1}^y-v_{i}^y) \geq 0$ and
\item $v_i^z = v_j^z$,
\end{enumerate}
$\x_i \cap \z_j$ if 
\begin{enumerate}
\item $(v_i^x - v_{j+1}^x)(v_{i+1}^x - v_{j+1}^x) \geq 0$,
\item $v_i^y = v_{j+1}^y$ and
\item $(v_j^z-v_{i}^z)(v_{j+1}^z-v_{i}^z) \geq 0$,
\end{enumerate}
and $\y_j \cap \z_i$ if
\begin{enumerate}
\item $v_{i+1}^x = v_{j+1}^x$,
\item $(v_j^y - v_{i+1}^y)(v_{j+1}^y - v_{i+1}^y) \geq 0$ and
\item $(v_i^z-v_{j}^z)(v_{i+1}^z-v_{j}^z) \geq 0$.
\end{enumerate}
Each of these conditions requires some coordinate equality. However, the top dimensional cells in $\Cell_{3n-3}(P_n)$ contain only points for which there are no coordinate equalities, so the point $v$ must be located in some cell of codimension at least one, and thus must be in the closure of some cell of codimension one. 

Now, suppose $\phi_v$ is singular, so there is an intersection of non-parallel distant pipes in $\phi_v$ and at least one of the above collection of conditions is satisfied by $\phi_v$. Thus $\phi_v$ is contained in the closure of a cell $e'\in \Cell_{3n-4}(P_n)$ whose elements have only the single specified coordinate equality and the necessary coordinate inequalities to induce this intersection. As all elements in the interior $e'$ share the same pattern of coordinate equalities and inequalities, and the boundary of $e'$ consists of points with strictly more coordinate equalities (which cannot cause this intersection condition to fail), for all points $\phi_{v'} \in \bar{e}$, $\phi_{v'}$ also has this intersection of non-parallel distant pipes and thus is singular.
\end{proof}

Lemma \ref{lem:singinface} tells us both that $K_n$ is an open submanifold of $P_n$ and that the discriminant $S_n$ is described by a closed subcomplex of $P_n$ generated by codimension one cells, whose combinatorics we now carefully develop.

\begin{defn}
Let $\Cell_\bullet(S_n)$ be the cell complex on $S_n$ induced by all of the elements of $\Cell_{3n-4}(P_n)$ whose constituent plumbers' curves are singular.
\end{defn}

We require a naming convention for cells in these complexes involving coordinate equalities.

\begin{defn} Let $\sigma \in \Sigma_n$. We say $\tau\in \Sigma_n$ is a \emph{transposition appearing in $\sigma$} if it is of the form $\tau = (\sigma(i)\; \sigma(i+1))$ for some $i\in\{1\dots n-1\}$.
\end{defn}

For example, the transpositions appearing in 3124 are $(3 \; 1)$, $(1 \; 2)$ and $(2 \; 4)$. 

The elements of $\Cell_\bullet(P_n)$ carry left actions by the group $\Sigma_{n-1}$ for each coordinates, and we write $\rho_x \cdot e(\sigma_x, \sigma_y, \sigma_z) = e(\rho_x\sigma_x, \sigma_y, \sigma_z)$, extending to the other coordinates similarly.

Observe that precomposing $\sigma$ by $\tau$ exchanges two adjacent elements in permutation notation precisely when $\tau$ appears in $\sigma$. Such a transposition reverses a single coordinate inequality in the definition of a cell, so the transpositions appearing in the permutations $\sigma_x, \sigma_y$ and $\sigma_z$ thus enumerate the codimension one boundaries of the cell $e(\sigma_x, \sigma_y, \sigma_z)$. 

\begin{prop} Let $e(\sigma_x, \sigma_y, \sigma_z)\in \Cell_{3n-3}(P_n)$ and $\tau$ a transposition. Then $\bar{e} \;\cap\; \overline{\tau \cdot e}$ is of codimension 1 if and only if $\tau$ appears in $\sigma_x$. 
\end{prop}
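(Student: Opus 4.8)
The plan is to reduce the statement to a one-coordinate computation using the product structure of the cells. Write $\Delta(\sigma)$ for the closed order simplex in the $x$-coordinates cut out by the weak inequalities $v^x_{\sigma(1)} \le v^x_{\sigma(2)} \le \cdots \le v^x_{\sigma(n-1)}$, so that $\bar{e}(\sigma_x,\sigma_y,\sigma_z)$ is the product $\Delta(\sigma_x)\times\Delta(\sigma_y)\times\Delta(\sigma_z)$ of the three coordinate factors. Since the two cells in the statement share $\sigma_y$ and $\sigma_z$, their closures agree in those two factors, and
$$\bar{e}(\sigma_x,\sigma_y,\sigma_z)\cap\bar{e}(\tau\sigma_x,\sigma_y,\sigma_z)=\big(\Delta(\sigma_x)\cap\Delta(\tau\sigma_x)\big)\times\Delta(\sigma_y)\times\Delta(\sigma_z).$$
As the last two factors are full $(n-1)$-dimensional, the codimension of this intersection in $P_n$ equals the codimension of $\Delta(\sigma_x)\cap\Delta(\tau\sigma_x)$ inside the $(n-1)$-simplex $\Delta(\sigma_x)$. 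First I would record this reduction, after which only the $x$-coordinate matters.

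Next I would translate the combinatorial hypothesis into geometry. Every transposition can be written uniquely as $\tau=(\sigma_x(i)\,\sigma_x(j))$ with $i<j$, and by definition $\tau$ appears in $\sigma_x$ precisely when $j=i+1$. A direct computation of $\tau\sigma_x$ shows that postcomposition by $\tau$ exchanges the entries in positions $i$ and $j$ of the ordered list $\sigma_x(1),\dots,\sigma_x(n-1)$ and fixes every other position; hence $\Delta(\tau\sigma_x)$ is the order simplex for that modified list. The intersection $\Delta(\sigma_x)\cap\Delta(\tau\sigma_x)$ is then exactly the set on which both chains of inequalities hold at once, and I would read off the equalities this forces.

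In the adjacent case $j=i+1$ the two chains differ only in the comparison between $v^x_{\sigma_x(i)}$ and $v^x_{\sigma_x(i+1)}$, so requiring both forces the single equality $v^x_{\sigma_x(i)}=v^x_{\sigma_x(i+1)}$ and leaves all other inequalities intact; this is a facet of $\Delta(\sigma_x)$, of codimension exactly one. In the non-adjacent case $j\ge i+2$ the chain for $\sigma_x$ gives $v^x_{\sigma_x(i)}\le v^x_{\sigma_x(j)}$ while the chain for $\tau\sigma_x$, which has these endpoints reversed, gives $v^x_{\sigma_x(j)}\le v^x_{\sigma_x(i)}$; together with monotonicity along the block these collapse the entire segment $v^x_{\sigma_x(i)}=v^x_{\sigma_x(i+1)}=\cdots=v^x_{\sigma_x(j)}$, which imposes $j-i\ge 2$ independent equalities and so has codimension at least two. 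Since these two cases exhaust all transpositions, both implications follow.

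The one step needing care — the main obstacle — is the bookkeeping in the non-adjacent case: I must check that postcomposition really fixes the positions strictly between $i$ and $j$ and reverses only the endpoints, and that the forced coincidence genuinely involves two or more independent coordinate equalities rather than one. I would settle this by evaluating $(\tau\sigma_x)(m)$ for $i\le m\le j$ explicitly, confirming that $v^x_{\sigma_x(i)}=v^x_{\sigma_x(i+1)}$ and $v^x_{\sigma_x(j-1)}=v^x_{\sigma_x(j)}$ are both forced and distinct, which secures the codimension-at-least-two conclusion.
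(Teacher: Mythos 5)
Your proof is correct, and it is essentially the argument the paper intends: the paper states this proposition without proof, relying on the immediately preceding observations that each closed cell is a product of three order simplices and that transpositions appearing in $\sigma_x$ describe the boundary, so that facets correspond exactly to adjacent transpositions. Your reduction to the $x$-coordinate factor, together with the block-collapse computation showing codimension $j-i\ge 2$ for a non-adjacent transposition, supplies precisely the verification the paper leaves implicit.
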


This motivates the following naming convention for codimension 1 cells. 

\begin{defn} \label{def:codim1cell} Let $e(\sigma_x, \sigma_y, \sigma_z) \subseteq P_n$ be a cell and $\tau_x=(\sigma_x(i)\;\sigma_x(i+1))$ a transposition appearing in $\sigma_x$. Define a codimension one cell named $e(\sigma_x, \sigma_y, \sigma_z; \tau_x)$ using the same inequalities in equation (\ref{eqn:respectsigma}) as for $e(\sigma_x, \sigma_y, \sigma_z)$, but changing the inequality $v^x_{\sigma_x(i)} < v^x_{\sigma_x(i+1)}$ to an equality. 
\end{defn} 

For example, all plumbers curves $\phi_v$ contained in $e(1324_x, \sigma_y, \sigma_z; (2\_3)_x)$ satisfy $0 < v_1^x < v_3^x = v_2^x < v_4^x < 1$. 

As is clear from the example, however, this notation is redundant: each cell is named twice. However, the convention makes incidence relations with the $(3n-3)$-cells easy to understand and identification of duplicate names is straightforward.

\begin{prop} $e(\sigma_x, \sigma_y, \sigma_z; \tau_x) = e(\sigma_x', \sigma_y, \sigma_z; \tau_x)$ if and only if $\tau_x\sigma_x = \sigma_x'$. 
\end{prop}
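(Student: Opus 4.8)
The plan is to exploit the product structure of the cells to collapse the problem onto the single coordinate $x$, and then to show that the relevant face of the $x$-simplex remembers the pair $(\sigma_x,\tau_x)$ only up to reordering the two transposed letters.

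First I would reduce to the $x$-factor. Since $e(\sigma_x,\sigma_y,\sigma_z)$ is a product of three open simplices cut out respectively by the $x$-, $y$- and $z$-inequalities on disjoint sets of coordinates, the codimension one cell $e(\sigma_x,\sigma_y,\sigma_z;\tau_x)$ is likewise a product $F_x(\sigma_x;\tau_x)\times F_y(\sigma_y)\times F_z(\sigma_z)$, where $F_x(\sigma_x;\tau_x)$ is the face obtained from the $x$-simplex by replacing $v^x_{\sigma_x(i)}<v^x_{\sigma_x(i+1)}$ with an equality, and $F_y,F_z$ are the open $y$- and $z$-simplices respecting $\sigma_y,\sigma_z$. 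Both cells in the statement share the same $\sigma_y,\sigma_z$, and these open simplices are nonempty, so equality of the two products is equivalent to equality of the $x$-faces $F_x(\sigma_x;\tau_x)=F_x(\sigma_x';\tau_x)$. This isolates the entire content of the proposition in the $x$-coordinate.

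For the ``if'' direction I would argue by direct inspection. Writing $\tau_x=(\sigma_x(i)\;\sigma_x(i+1))$, the permutation $\tau_x\sigma_x$ agrees with $\sigma_x$ except that its values in positions $i$ and $i+1$ are interchanged; in particular $\tau_x$ again appears in $\tau_x\sigma_x$ at position $i$, so the name with $\sigma_x'=\tau_x\sigma_x$ is legitimate. The chain of strict inequalities defining $F_x(\tau_x\sigma_x;\tau_x)$ is then identical entry by entry to that defining $F_x(\sigma_x;\tau_x)$, the only change being that the single equality is written $v^x_{\sigma_x(i+1)}=v^x_{\sigma_x(i)}$ rather than $v^x_{\sigma_x(i)}=v^x_{\sigma_x(i+1)}$; these cut out the same locus. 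Hence the two faces, and therefore the two cells, coincide.

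The ``only if'' direction is the crux, and I would handle it with a preorder invariant. Any point $v$ in the open face $F_x(\sigma_x;\tau_x)$ assigns to $\{1,\dots,n-1\}$ the values $v^x_1,\dots,v^x_{n-1}$, inducing a total preorder with exactly one tie, namely $v^x_{\sigma_x(i)}=v^x_{\sigma_x(i+1)}$; every point of the open face induces the same preorder, so the preorder together with its tied pair (which is precisely the support of $\tau_x$) is an invariant of the face. Now suppose $F_x(\sigma_x';\tau_x)=F_x(\sigma_x;\tau_x)$. Because the name forces $\tau_x$ to appear in $\sigma_x'$, its two transposed letters are adjacent in $\sigma_x'$, and the inequalities respected by $\sigma_x'$ must reproduce the recorded preorder; thus $\sigma_x'$ lists the indices in nondecreasing value order, i.e. it is a linear extension of the preorder. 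Since every index outside the tied pair has value strictly above or strictly below the common tied value, the tied pair is automatically adjacent in any linear extension, and there are exactly two such extensions, differing only by swapping that pair; these are $\sigma_x$ and $\tau_x\sigma_x$. Hence $\sigma_x'\in\{\sigma_x,\tau_x\sigma_x\}$, and whenever the two names are genuinely distinct this yields $\sigma_x'=\tau_x\sigma_x$. The only real obstacle is making the preorder argument airtight — verifying both that the preorder is constant on the open cell and that exactly two linear extensions keep the support of $\tau_x$ adjacent — after which the stated equivalence follows formally.
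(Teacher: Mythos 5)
Your proof is correct and is essentially the verification the paper leaves implicit: the paper states this proposition without proof, treating identification of duplicate names as straightforward from the defining inequalities, which is exactly what your reduction to the $x$-factor and entry-by-entry comparison of the (in)equalities makes precise. Your observation that the ``only if'' direction literally yields only $\sigma_x' \in \{\sigma_x, \tau_x\sigma_x\}$ (the statement as written fails in the trivial case $\sigma_x' = \sigma_x$, where the cells coincide but $\tau_x\sigma_x \neq \sigma_x'$) is a legitimate refinement of the paper's phrasing, not a gap in your argument.
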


We now extend this notation to name cells of any codimension. 

\begin{defn} \label{def:perminsigma} Let $\ell\co\Sigma_n \to \mathbb{N}$ be the standard Coxeter group length function, and $\sigma, \rho\in \Sigma_n$. Say $\rho$ \emph{is a permutation appearing in $\sigma$} if $\rho$ admits a decomposition into $\ell(\rho)$ transpositions, each of which is a transposition appearing in $\sigma$.
\end{defn}

If $\rho$ appears in $\sigma$, then its decomposition into transpositions appearing in $\sigma$ is unique up to reordering. This occurs because in a decomposition of $\rho$ into a collection of disjoint cycles $\{\rho^j\}$, $\sigma(i)$ and $\sigma(i+1)$ occur in the same $\rho^j$ if and only if $(\sigma(i) \;\sigma(i+1))$ is a transposition in the decomposition of $\rho$.

\begin{defn} \label{def:codimkcell} Let $e(\sigma_x, \sigma_y, \sigma_z) \in \Cell_{3n-3}(P_n)$ and $\rho_x, \rho_y, \rho_z \in \Sigma_{n-1}$ be permutations such that $\rho_i$ appears in $\sigma_i$, $i\in\{x,y,z\}$. Define the cell $e(\sigma_x, \sigma_y, \sigma_z; \rho_x, \rho_y, \rho_z)\in \Cell_{3n-3-(\ell(\rho_x) + \ell(\rho_y) + \ell(\rho_z))}(P_n)$ by, for each transposition occuring in the chosen decomposition of the $\rho_i$, $i\in\{x,y,z\}$, changing the appropriate inequality in equation (\ref{eqn:respectsigma}) to an equality (as in Definition \ref{def:codim1cell}).
\end{defn}

The uniqueness of the decomposition of each $\rho_i$ shows that this cell naming convention is well defined, as there is only one choice of equalities associated to a given cell name. We can determine how many different names apply to a given cell from the structure of the $\rho_i$.

\begin{prop} Let $e(\sigma_x, \sigma_y, \sigma_z; \rho_x, \rho_y, \rho_z) \in \Cell_\bullet(P_n)$. Decompose each $\rho_i$, $i\in\{x, y, z\}$, into a product of $k(i)$ of disjoint cycles, $\rho_i^1\rho_i^2\cdots\rho_j^{k(i)}$. The cell $e$ has
\begin{eqnarray*}
\#e&=&\prod_{i\in\{x, y, z\}}\left[\prod_{j=1}^{k(i)}(\ell(\rho_i^j)+1)!\right]
\end{eqnarray*}
redundant labels. Each such label uniquely identifies a top dimensional cell in whose closure $e$ occurs.
\end{prop}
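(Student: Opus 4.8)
The plan is to exploit the product structure of the cells. Since $e(\sigma_x,\sigma_y,\sigma_z)$ is a product of three $(n-1)$-simplices and the collapsing data $(\rho_x,\rho_y,\rho_z)$ of Definition \ref{def:codimkcell} is assigned independently in each coordinate, a label for $e$ is precisely a triple of single-coordinate labels, one in each direction. Consequently $\#e$ factors as a product over $i\in\{x,y,z\}$, and it suffices to count, in a single coordinate, the pairs $(\sigma,\rho)$ with $\rho$ appearing in $\sigma$ that name a fixed codimension-$\ell(\rho)$ face of one simplex.

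First I would give an intrinsic description of such a face. Collapsing the transpositions in the decomposition of $\rho$ turns the chain of inequalities (\ref{eqn:respectsigma}) into a chain of strict and non-strict relations; the non-strict relations partition the indices $\{1,\dots,n-1\}$ into blocks of coordinates forced to be equal, and the surviving strict inequalities linearly order these blocks by value. Thus the face is completely determined by an \emph{ordered partition}: blocks $B_1,\dots,B_k$ together with the value-order $B_1<\cdots<B_k$. Invoking the uniqueness of the transposition decomposition of $\rho$ (the remark following Definition \ref{def:perminsigma}), the disjoint cycles $\rho^j$ correspond bijectively to the blocks: each cycle is a maximal run of positions, consecutive in $\sigma$-order, joined by collapsed adjacent transpositions, and a cycle of length $\ell(\rho^j)$ acts on exactly $\ell(\rho^j)+1$ indices, so $|B_j|=\ell(\rho^j)+1$.

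Next I would count the labels producing a fixed ordered partition $B_1<\cdots<B_k$. A pair $(\sigma,\rho)$ names this face exactly when $\sigma$ lists the indices so that the blocks appear consecutively and in the order $B_1,\dots,B_k$, since the block order is dictated by the strict value inequalities while the collapsed equalities can only be adjacent transpositions. Between blocks the order of $\sigma$ is therefore rigid, whereas within each block $B_j$ the $|B_j|$ indices may be listed in any of $|B_j|!$ orders. For each admissible $\sigma$ the permutation $\rho$ is forced, being the product of the $|B_j|-1$ adjacent transpositions internal to each block, so no further freedom arises. Hence the number of single-coordinate labels is $\prod_{j=1}^{k}|B_j|!=\prod_{j=1}^{k}(\ell(\rho^j)+1)!$, and taking the product over the three coordinates yields the stated formula. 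For the uniqueness claim I would observe that each label determines its $\sigma$-data, hence the top cell $e(\sigma_x,\sigma_y,\sigma_z)$; since distinct admissible labels use distinct $\sigma$-triples and $e$ lies in the closure of each, the labels biject with the top-dimensional cells whose closure contains $e$.

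The one delicate point, and the thing to get right, is verifying that reordering $\sigma$ within a single block genuinely leaves the geometric \emph{face} unchanged while producing an honestly distinct label whose new $\rho$ still appears in the reordered $\sigma$. This is exactly the place where the separation between the geometric cell and its combinatorial name must be handled with care, and it rests squarely on the uniqueness-of-decomposition property quoted above; the remaining bookkeeping is routine.
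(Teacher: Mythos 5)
Your argument is correct and takes essentially the same route as the paper's proof: disjoint cycles of $\rho_i$ correspond to blocks of adjacent equalities in (\ref{eqn:respectsigma}), a cycle of length $\ell(\rho_i^j)$ collapses a block of $\ell(\rho_i^j)+1$ vertices which may be ordered in $(\ell(\rho_i^j)+1)!$ ways, and these choices are independent across cycles and across the three coordinates, with labels in bijection with the top-dimensional cells containing $e$. Your ordered-partition formulation and explicit label-to-top-cell bijection are just a slightly more structured packaging of the identical count.
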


\begin{proof}
Let $\rho_x^j$ be a cycle in the decomposition of $\rho_x$ into disjoint cycles. The requirement that $\rho_x^j$ appear in $\sigma_x$ tells us that this subset of $\{1,\dots,n-1\}$ on which $\rho_x^j$ has nontrivial action is precisely $\{\sigma(t), \sigma(t+1),\dots,\sigma(t+\ell(\rho_x^j))\}$ for some $t$. Now, the cell $e(\sigma_x, \sigma_y, \sigma_z; \rho_x^j)$ is defined to be all maps $v$ respecting $\sigma_y$ in $y$, $\sigma_z$ in $z$ and so that
\begin{eqnarray*}
\cdots <  v^x_{\sigma_i(t-1)} < v^x_{\sigma_i(t)} = v^x_{\sigma_i(t+1)} = \cdots = v^x_{\sigma_i(t+\ell(\rho_x^j))} < v^x_{\sigma_i(t+\ell(\rho_x^j)+1)} \cdots 
\end{eqnarray*}
That is, decomposing $\rho_x$ into disjoint cycles is equivalent to isolating blocks of adjacent of equalities in equation (\ref{eqn:respectsigma}). 

To select a cell $e(\sigma_x', \sigma_y, \sigma_z)$ which has in its closure $e(\sigma_x, \sigma_y, \sigma_z, \rho_x^i)$, we choose an order for these vertices. There are $(\ell(\rho_x^j)+1)!$ such, and it is clear that this choice is independent of the corresponding choices for the other disjoint cycles in $\rho_x$. 

As we can perform this procedure independently for each of the disjoint cycles and for each coordinate, our final count is simply the product of that for the disjoint cycles, as required.
\end{proof}

For example, there are two classes of names for codimension 2 cells: those for which the $\rho$ are some combination of two disjoint transpositions and those for which the $\rho$ consist of a single 3-cycle. In the former case, there are 4 names for each cell, while in the latter there are 6. 

Utilizing this naming scheme, we can extend the idea of representative knots for top dimensional cells to produce representative curves for every cell in $\Cell_\bullet(P_n)$. 

\begin{defn} \label{def:repcurve} Fix a cell $e = e(\sigma_x, \sigma_y, \sigma_z; \rho_x, \rho_y, \rho_z) \subseteq P_n$. A map in such a cell will have vertices in $n - \ell(\rho_x)$ distinct ($y$-$z$)-planes, some of the $x$ vertices now being coplanar. Define $E[\sigma_x, \rho_x]: \{1,\dots, n-1\} \to \mathbb{N}$ to take as its value the number of equalities that occur before $v_i$ in the $x-axis$ for elements of $e$.



The \emph{representative curve} (or \emph{representative knot}, if non-singular) for $e$ is $\phi_{v(e)}$ with 
\begin{displaymath}
v(e)_i=\left(\frac{E[\sigma_x, \rho_x](i)}{n-\ell(\rho_x)}, \frac{E[\sigma_y, \rho_y](i)}{n-\ell(\rho_y)}, \frac{E[\sigma_z, \rho_z](i)}{n-\ell(\rho_z)}\right).
\end{displaymath}
\end{defn}

\subsection{An algorithm for computing components of $K_n$}

Our naming convention for cells allows us to resolve several fundamental questions about the spaces of plumbers' knots algorithmically.

By Lemma \ref{lem:singinface}, the codimension 1 cells in $C_{3n-4}(P_n)$ are partitioned into those whose elements are plumbers' knots and which consist of singular plumbers' curves. As those which consist of singular plumbers' curves generate a cell complex for $S_n$, we wish to distinguish them. Fortunately, our naming convention tells us into which of these classes a given cell falls.

Choose the representative knot for a cell $e(\sigma_x, \sigma_y, \sigma_z)$ and a transposition $\tau = (a\; b)$ appearing in $\sigma_x$. At the level of plumbers' curves, the left action of $\tau$ on $\sigma_x$ corresponds to exchanging the x-coordinates of the $a$th and $b$th vertices of the knots. A corresponding path in $P_n$ between the representative knots passes through the codimension one cell $e(\sigma_x, \sigma_y, \sigma_z; \tau_x)$, and thus must involve at least one curve where the ($y$-$z$)-planes containing the pipes $\y_{a-1}$ and $\z_{a-1}$ is the same as that containing $\z_{b-1}$ and $\y_{b-1}$. These pipes may intersect in various combinations, as illustrated in Figure \ref{fig:xswap}, and any other intersections will only occur in the closure of the cell, as discussed in Remark \ref{rmk:overrun}. 

Similar considerations demonstrate that transpositions appearing in $\sigma_y$ (respectively, $\sigma_z$) can only cause intersections of the form $\x_a \cap \z_{b-1}$ or $\x_{b} \cap \z_{a-1}$ (respectively, $\x_a \cap \y_b$ or $\x_b \cap \y_a$).

\begin{figure} 
\begin{center}
\psfrag{C1}{$x = v_{a}^x$}
\psfrag{C2}{$x = v_{b}^x$}
\psfrag{C3}{$y_{a-1}$}
\psfrag{C4}{$z_{a-1}$}
\psfrag{C5}{$y_{b-1}$}
\psfrag{C6}{$z_{b-1}$}
\psfrag{C7}{$x = v_{a}^x = v_{b}^x$}
\psfrag{C8}{$y_{a-1}\cap z_{b-1}$}
\psfrag{C9}{$y_{b-1} \cap z_{a-1}$}
\includegraphics[width=9cm]{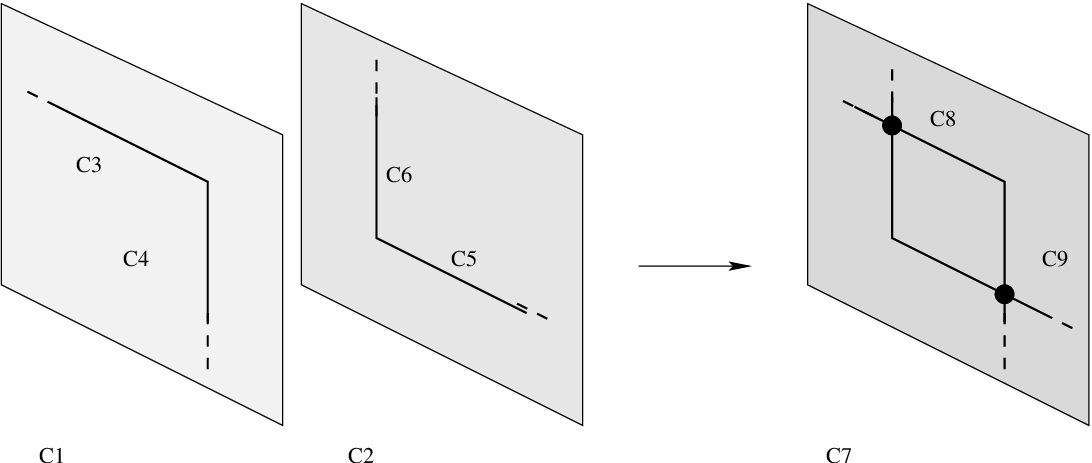}
\caption{Transposing x-coordinates may result in y-z intersections.}
\label{fig:xswap}
\end{center}
\end{figure}

\begin{defn} \label{def:produceint} Let $e(\sigma_x, \sigma_y, \sigma_z) \in \Cell_{3n-3}(P_n)$ and $\tau_x = (a\;b)$ a transposition appearing in $\sigma_x$. We say that \emph{$\tau_x$ produces an intersection for $e$} if one of the following pairs of conditions holds.

Either 
\begin{enumerate}
\item $\sigma_y^{-1}(b)$ is between $\sigma_y^{-1}(a - 1)$ and $\sigma_y^{-1}(a)$; and $\sigma_z^{-1}(a - 1)$ is between $\sigma_z^{-1}(b - 1)$ and $\sigma_z^{-1}(b)$; or
\item $\sigma_y^{-1}(a)$ is between $\sigma_y^{-1}(b - 1)$ and $\sigma_y^{-1}(b)$; and $\sigma_z^{-1}(b-1)$ is between $\sigma_z^{-1}(a - 1)$ and $\sigma_z^{-1}(a)$
\end{enumerate}
\end{defn}

These conditions translate into those for $\y_{a-1} \cap \z_{b-1}$ and $\y_{b-1}\cap\z_{a-1}$ in the proof of Lemma \ref{lem:singinface} at the level of coordinates. The definitions for producing intersections in the $y$- and $z$-coordinates are similar.

\begin{thm} \label{thm:isotopyint} Let $e$ and $\tau_x$ be as in Definition \ref{def:produceint}. There is a straight-line geometric isotopy between the representative knots for $e$ and $\tau_x\cdot e$ if and only if $\tau_x$ does not produce a y-z intersection for $e$. 
\end{thm}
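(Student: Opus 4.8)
The plan is to analyse the unique straight line in the cube $P_n$ joining the two representative knots and to show that it meets the discriminant $S_n$ in at most one point, namely when a $y$-$z$ intersection is produced. Write $\tau_x=(a\;b)$ with $a=\sigma_x(i)$ and $b=\sigma_x(i+1)$. Since $(\tau_x\sigma_x)^{-1}=\sigma_x^{-1}\tau_x$, passing from $e(\sigma_x,\sigma_y,\sigma_z)$ to $e(\tau_x\sigma_x,\sigma_y,\sigma_z)$ only exchanges the two values $v_a^x=\tfrac{i}{n}$ and $v_b^x=\tfrac{i+1}{n}$, fixing every other coordinate of every vertex. Hence the straight line $\Phi(t)=(1-t)v(e)+t\,v(e')$ moves only $v_a^x(t)=\tfrac{i+t}{n}$ and $v_b^x(t)=\tfrac{i+1-t}{n}$; these stay inside $(\tfrac{i}{n},\tfrac{i+1}{n})$, so they never meet any other $x$-coordinate and cross each other exactly at $t=\tfrac12$. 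Thus $\Phi(t)$ lies in the open cell $e$ for $t<\tfrac12$, in $e'$ for $t>\tfrac12$, and in the open codimension-one cell $e(\sigma_x,\sigma_y,\sigma_z;\tau_x)$ at $t=\tfrac12$. By Lemma \ref{lem:singinface} the open top cells contain only knots, so $\Phi$ is a geometric isotopy if and only if the single midpoint $\phi_{\Phi(1/2)}$ is non-singular; the theorem reduces to deciding this.

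Next I would test the midpoint against the three intersection criteria of Lemma \ref{lem:singinface}. At $t=\tfrac12$ the one and only coincidence among all coordinates is $v_a^x=v_b^x$, every $y$- and $z$-coordinate being distinct. The $\x\cap\y$ criterion requires a $z$-equality and the $\x\cap\z$ criterion a $y$-equality, neither of which is available, so these intersection types cannot occur; the $\y_j\cap\z_i$ criterion requires $v_{i+1}^x=v_{j+1}^x$, forcing $\{i+1,j+1\}=\{a,b\}$ and leaving exactly the two candidates $\y_{a-1}\cap\z_{b-1}$ and $\y_{b-1}\cap\z_{a-1}$. Substituting the representative values $v_m^y=\sigma_y^{-1}(m)/n$ and $v_m^z=\sigma_z^{-1}(m)/n$ into the two surviving betweenness conditions of the $\y_j\cap\z_i$ criterion turns them verbatim into the two clauses of Definition \ref{def:produceint}. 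So $\phi_{\Phi(1/2)}$ carries one of these intersections precisely when $\tau_x$ produces a $y$-$z$ intersection, giving both directions of the equivalence once the intersections are known to be genuine.

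The step I expect to require the most care is exactly that last clause: reconciling the closed inequalities of Lemma \ref{lem:singinface} with the distance hypothesis that a genuine singularity demands. A short pipe-index count shows that when $b\ge a+1$ the pair $\y_{a-1}\cap\z_{b-1}$ is separated by $3(b-a)+1\ge 4$ pipes, hence distant, while its partner can fail to be distant when $a$ and $b$ are curve-adjacent; the reverse holds when $b\le a-1$. I would handle this by reading ``between'' in Definition \ref{def:produceint} as \emph{strict} betweenness, and then checking that a betweenness factor degenerates to zero exactly in the curve-adjacent cases $b=a+1$ or $a=b+1$ — which are precisely the non-distant pairs, corresponding to a mere shared-vertex contact rather than a crossing. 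Strict betweenness therefore discards exactly the spurious contacts and retains exactly the distant crossings, so the equivalence of the previous paragraph is genuine. Finally I would note that the same-direction pairs $\y\cap\y$ and $\z\cap\z$ are automatically excluded, since they would force a repeated $z$- or $y$-coordinate, which completes the enumeration and hence the proof.
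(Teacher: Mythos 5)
Your proof is correct and takes essentially the same approach as the paper, which in fact states Theorem \ref{thm:isotopyint} without a formal proof, relying on the informal discussion preceding Definition \ref{def:produceint}: the straight-line path exchanges only the two $x$-coordinates, can fail to be an isotopy only at the crossing point, and there the only possible distant-pipe intersections are $\y_{a-1}\cap\z_{b-1}$ and $\y_{b-1}\cap\z_{a-1}$, whose intersection conditions from Lemma \ref{lem:singinface} are verbatim the clauses of Definition \ref{def:produceint}. Your reduction to the single midpoint in the codimension-one cell and your strict-betweenness analysis of the curve-adjacent cases $b=a\pm 1$ (where one candidate pair fails to be distant and its betweenness condition degenerates) make rigorous exactly what the paper glosses over; the only flaw is a harmless miscount, since the number of intervening pipes between $\y_{a-1}$ and $\z_{b-1}$ for $b>a$ is $3(b-a)$ rather than $3(b-a)+1$, which is still at least three and so does not affect the distantness conclusion.
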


\begin{defn}
Let $e$ and $\tau_x$ be as in Definition \ref{def:produceint}. If an isotopy from $\phi_{v(e)}$ to $\phi_{v(\tau \cdot e)}$ exists, it is an \emph{elementary geometric isotopy}.
\end{defn}

Elementary geometric isotopies play a role for plumbers' knots similar to that played by Reidermeister moves for diagrams of $C^1$ knots. However, the Reidermeister moves change various complexity measures of a knot diagram, while our notion of complexity (number of moves) is invariant under elementary geometric isotopy. This allows us to consider the equivalence relation generated by such moves in the context of specific spaces $K_n$.

\begin{cor} Let $\phi_{w_1}\in e_1$, $\phi_{w_2}\in e_2$. Then $\phi_{w_1}$ is geometrically isotopic to $\phi_{w_2}$ if and only if there is a sequence of elementary geometric isotopies connecting $\phi_{v(e_1)}$ to $\phi_{v(e_2)}$.
\end{cor}

That is,

\begin{cor} \label{cor:h0freeab} $H_0(K_n) \cong \mathbb{Z}\langle e(\sigma_x, \sigma_y, \sigma_z)\;|\;\sigma_x, \sigma_y, \sigma_z \in \Sigma_{n-1}\rangle/\sim$, where $\sim$ is the equivalence relation generated by elementary geometric isotopies.
\end{cor}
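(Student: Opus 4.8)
The plan is to compute $H_0(K_n)$ as the free abelian group on the path components of $K_n$, then identify those path components with equivalence classes of top-dimensional cells under the relation generated by elementary geometric isotopies. For any space $X$, $H_0(X)$ is canonically the free abelian group on $\pi_0(X)$, so the content of the corollary is entirely the identification of $\pi_0(K_n)$ with the stated quotient set. I would state this reduction explicitly at the outset, so that the remainder of the argument is purely about path components.

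First I would establish that every path component of $K_n$ contains a representative knot. By Lemma \ref{lem:singinface}, $K_n$ is an open submanifold of $P_n$, and since the top-dimensional cells $e(\sigma_x,\sigma_y,\sigma_z)$ that consist of knots are open, dense, and cover all but a codimension-one subset, any $\phi_v \in K_n$ lies in the closure of some such top cell. The remark following the definition of representative knots records that any knot in the closure of a cell is geometrically isotopic to that cell's representative knot via a straight-line isotopy; hence every point of $K_n$ is in the same path component as some $v(e)$. This gives a surjection from the set of top cells $\{e(\sigma_x,\sigma_y,\sigma_z)\}$ onto $\pi_0(K_n)$.

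Next I would show this surjection descends to a bijection from the quotient by $\sim$. That two representative knots related by an elementary geometric isotopy lie in the same path component is immediate from Theorem \ref{thm:isotopyint}, since an elementary geometric isotopy is by definition a path in $K_n$; so $\sim$-equivalent cells map to the same component, and the map factors through the quotient. The converse—that representatives in the \emph{same} path component are $\sim$-equivalent—is exactly the preceding Corollary, which asserts that $\phi_{v(e)}$ and $\phi_{v(e')}$ are geometrically isotopic if and only if their cells are connected by a chain of elementary geometric isotopies. Combining the surjectivity from the previous paragraph with this if-and-only-if statement yields that the induced map on the quotient is a bijection.

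The main obstacle, and the only genuinely delicate point, is the claim that the chain of elementary isotopies connecting the representatives can be taken to pass only through \emph{top-dimensional} cells and their elementary wall-crossings, rather than through arbitrary paths in $K_n$ that might wander into lower-codimension strata. This is where I would lean hardest on the preceding Corollary, which has already packaged precisely that combinatorial reduction; in a fully self-contained treatment one would argue that any path in $K_n$ can be homotoped, rel endpoints, to one transverse to the codimension-one walls of the discriminant complex, meeting them one at a time, so that each wall-crossing is an elementary geometric isotopy in the sense of Theorem \ref{thm:isotopyint}. Given that reduction, the rest is formal, and I would finish by invoking the functorial identification of $H_0$ with $\mathbb{Z}\langle \pi_0(K_n)\rangle$ to conclude.
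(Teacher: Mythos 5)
Your proposal is correct and follows exactly the route the paper intends: the paper states this corollary without proof as an immediate consequence of the preceding corollary, and your argument---identifying $H_0(K_n)$ with $\mathbb{Z}\langle\pi_0(K_n)\rangle$, using the straight-line isotopy to a representative knot to show every component is hit by a top cell, and invoking the preceding corollary for the bijection on the quotient---is precisely that intended derivation. Your closing worry about paths wandering into lower strata is rightly deferred to the preceding corollary, which is where the paper packages that reduction.
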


Indeed, these observations reduce the problem of computing isotopy classes of $n$-move plumbers knots to that of computing components of a graph.

\begin{defn}
Let $\Gamma_n = (V_n, E_n)$ by the graph with $V = \Cell_{3n-3}(P_n)$ and $(e, e') \in E$ if there is an elementary genometric isotopy between the representative knots for $e$ and $e'$.
\end{defn}

The results of computing the components of this graph for $K_5$ and $K_6$ are included in Table \ref{tab:knottypes}. The topological isotopy classes of these knots were determined by computation of the Alexander polynomial on representatives of each class. The code for each of these processes is appended to the end of the \TeX\ file for this paper on the arXiv.
 
\begin{table}

\begin{tabular}{c|c|c|c|c}
\hline
\multicolumn{5}{c}{Components of $K_5$}\\
\hline
Type&Cells&Representative&Type&Representative\\
\hline
$0_1$ & 13,728 & $1234_x, 1234_y, 1234_z$ \\ \hline
\multirow{3}{*}{$3_1$\! (R)} &31& $1342_x,2413_y,2413_z$& \multirow{3}{*}{$3_1$\! (L)} & $1342_x,3142_y,2413_z$\\ 
& 16 & $1342_x,2413_y,3124_z$&& $1342_x,3142_y,3124_z$\\ 
& 1 & $2431_x,2413_y,4213_z$&& $2431_x,3142_y,4213_z$\\ \hline
\multicolumn{5}{c}{Components of $K_6$}\\
\hline
$0_1$ & 1.7m & $12345_x, 12345_y, 12345_z$ \\ \hline
\multirow{3}{*}{$3_1$\! (R)} & 19,507 & $24135_x,31245_y,23145_z$&\multirow{3}{*}{$3_1$\! (L)} & $12453_x,13524_y,13524_z$ \\ 
& 5 & $42351_x,24315_y,24135_z$&  &$42351_x,51342_y,24135_z$\\ 
& 5 & $13524_x, 15324_y, 51342_z$&&  $13524_x, 42351_y, 51342_z$\\
 \hline
\multirow{2}{*}{$4_1$}& 393 & $14352_x, 31452_y, 42135_z$&\multirow{2}{*}{$4_1$}&  $31452_x,31524_y,32451_z$\\
&393&  $24153_x,25314_y,24315_z$&&  $24513_x,42135_y,32415_z$\\
\hline
\multirow{5}{*}{$5_1$\! (R)}& 19&  $24153_x,31524_y,42315_z$&\multirow{5}{*}{$5_1$\! (L)}&  $15342_x,31542_y,31524_z$\\
&19& $25134_x,41253_y,35241_z$&&  $52413_x,24513_y,25314_z$\\
&4&  $15342_x,24153_y,42153_z$ &&   $15342_x,31542_y,42513_z$\\
&4& $31542_x,31524_y,42315_z$&&  $31542_x,42513_y,42315_z$\\
&1&  $41523_x,41352_y,34152_z$&&  $41523_x,25314_y,34152_z$\\
\hline
\multirow{14}{*}{$5_2$\! (R)} &12& $15342_x,24513_y,35124_z$&\multirow{14}{*}{$5_2$\! (L)}&  $15342_x,31542_y,35124_z$\\
&12&  $25413_x,35124_y,25314_z$&&  $24513_x,42153_y,42315_z$\\
&9&  $25134_x,24153_y,35241_z$&&  $25134_x,35124_y,35241_z$\\
&9&  $25413_x,31524_y,42315_z$&&  $52413_x,24513_y,23514_z$\\
&4&  $15342_x,24513_y,42153_z$&&  $15342_x,31542_y,42153_z$\\
&4&  $31542_x,31524_y,42351_z$&&  $31542_x,42153_y,42315_z$\\
&3&  $15342_x,25413_y,31524_z$&&  $15342_x,31452_y,31524_z$\\
&3&  $24153_x,35214_y,42315_z$&&  $24153_x,41253_y,42315_z$\\
&2&  $15342_x,25413_y,42513_z$&&  $15342_x,31452_y,42513_z$\\
&2&  $35142_x,35214_y,42315_z$&&  $35142_x,41253_y,42315_z$\\
&1&  $15342_x,24153_y,41253_z$&&  $15342_x,35142_y,41253_z$\\
&1&  $31452_x,31524_y,42315_z$&&  $31452_x,42513_y,42315_z$\\
&1&  $41523_x,25314_y,43152_z$&&  $41523_x,41352_y,43152_z$\\
&1&  $41532_x,41352_y,34152_z$&&  $41532_x,25314_y,34152_z$\\
\hline
\end{tabular}

\caption{Components of $K_5$ and $K_6$; the number of cells in a component are the same in the second column}
\label{tab:knottypes}
\end{table}

We can also clearly computationally determine if two n-move plumbers' knots $\phi_{w_1}$ and $\phi_{w_2}$ are geometrically isotopic. Let $e_1, e_2\in V_n$ with $\phi_{w_1}\in e_1$, $\phi_{w_2} \in e_2$. Starting at $e_1$, perform a search for $e_2$. If the search terminates successfully, the knots are isotopic. Otherwise, they are not geometrically isotopic for the given n. 

Suppose $\varphi_1$ and $\varphi_2$ are $C^1$ knots. If it is possible to determine an $n$ so that approximating each via a plumbers' knot of $n$-moves ensures that a geometric isotopy exists if topological isotopy exists, this method can be used to determine if given knots are isotopic in $O((n!)^3)$ running time.

An immediate consequence of Theorem \ref{thm:isotopyint} and Lemma \ref{lem:singinface}, we have the following characterization of the cell complex for $S_n$.

\begin{cor} \label{cor:codim1cells} $\Cell_\bullet(S_n)$ is generated in dimension $3n-4$ as a cell complex by all cells of the form $e(\sigma_x, \sigma_y, \sigma_z, \tau_x)$ for which $\tau_x$ produces an intersection. 
\end{cor}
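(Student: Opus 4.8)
The plan is to combine the two cited results into a clean classification of which codimension one cells of $P_n$ lie in $S_n$. By Lemma \ref{lem:singinface}, $S_n$ is a closed subcomplex of $P_n$ generated by codimension one cells, so $\C_\bullet(S_n)$ is generated in its top dimension $3n-4$ by exactly those codimension one cells all of whose curves are singular. Moreover, the second assertion of Lemma \ref{lem:singinface} says that singularity is constant on the closure of a cell, so a codimension one cell belongs to $S_n$ if and only if its representative curve is singular. Finally, every codimension one cell is named $e(\sigma_x, \sigma_y, \sigma_z; \tau)$ with $\tau$ a transposition appearing in exactly one of $\sigma_x, \sigma_y, \sigma_z$ (Definition \ref{def:codim1cell}), since imposing a single equality drops the dimension by one. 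By symmetry it therefore suffices to treat the case $\tau = \tau_x$.

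First I would identify $e(\sigma_x, \sigma_y, \sigma_z; \tau_x)$ as the common codimension one face of the two top cells $e(\sigma_x, \sigma_y, \sigma_z)$ and $e(\tau_x\sigma_x, \sigma_y, \sigma_z)$, and observe that the straight-line path of Theorem \ref{thm:isotopyint} between their representative knots crosses the interior of this face at its midpoint, where the defining equality $v^x_a = v^x_b$ holds (the path fixes all $y$, all $z$, and all other $x$ coordinates, interpolating only the swapped pair). Then I would apply Theorem \ref{thm:isotopyint} directly. If $\tau_x$ does not produce a y-z intersection, the straight-line path is a geometric isotopy and stays in $K_n$; in particular the midpoint curve lying in the face is nonsingular, so by constancy of singularity on closed cells the entire face avoids $S_n$. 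Conversely, if $\tau_x$ produces a y-z intersection, the straight-line path fails to be a geometric isotopy precisely because the midpoint curve acquires the distant-pipe intersection $\y_{a-1}\cap\z_{b-1}$ or $\y_{b-1}\cap\z_{a-1}$ described before Definition \ref{def:produceint}; this singular curve lies in the face, so the whole face is contained in $S_n$. Hence $e(\sigma_x, \sigma_y, \sigma_z; \tau_x)$ lies in $S_n$ if and only if $\tau_x$ produces an intersection, with the symmetric statements for $\tau_y$ and $\tau_z$.

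The main point requiring care is the ``only if'' direction: I must be sure that $\tau_x$ producing an intersection is not merely sufficient but necessary for the face to be singular. This rests on the observation recorded just before Definition \ref{def:produceint}, that a single $x$-coordinate equality $v^x_a = v^x_b$ can only enable distant-pipe intersections of the y-z forms captured combinatorially in Definition \ref{def:produceint}, and that any other pipe coincidence along the straight-line path occurs only in the presence of one of these, hence never in the interior of a codimension one face where exactly one equality is active. Granting this, the face is singular exactly when one of the listed y-z intersections occurs, which is exactly the condition that $\tau_x$ produces an intersection. Combining the three coordinate cases, $\C_\bullet(S_n)$ is generated in dimension $3n-4$ by precisely the cells $e(\sigma_x, \sigma_y, \sigma_z; \tau)$ for which $\tau$ produces an intersection, as claimed.
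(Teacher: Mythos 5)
Your proof is correct and takes essentially the same route as the paper: the paper offers no separate argument, stating the corollary as an immediate consequence of Theorem \ref{thm:isotopyint} and Lemma \ref{lem:singinface}, which are exactly the two results you combine (the straight-line path crossing the shared codimension one face at its midpoint, plus constancy of singularity on cells). Your write-up simply makes explicit the details the paper leaves to the reader.
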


Equivalent to constructing the graph $\Gamma_n$ is the task of enumerating the cells in $\Cell_{3n-4}(S_n)$, as these cells correspond to edges not appearing in $E_n$.

\section{The map $\iota_n: P_n \to P_{n+1}$}\label{sec:maps}

For fixed $n$, elements of $P_n$ are too rigid to properly model classes of $C^1$ knots, as we see from the multiple representatives of various topological isotopy classes in Table \ref{tab:knottypes}. Therefore, we require a mechanism by which to increase the articulation of a knot of interest in a fashion which varies continuously across each $P_n$ and which, in the limit, produces a model of the space of $C^1$ knots. It will be convenient to construct the maps $P_n \to P_{n+1}$ so that they take $S_n$ to $S_{n+1}$ in addition to carrying $K_n$ to $K_{n+1}$. Such maps clearly involve adding vertices to curves, a process which is complicated by the presence of zero-length pipes in non-singular curves.

While it would also be useful for the map to be cellular with respect to the cell complex structure on $P_n$, and we will not be able to construct such a map. However, we can approximate a cellular map by applying a subdivision to the cell complex in the codomain, after which the map will be ``mostly cellular'' in the sense we can define a cellular map from a large subset of $\Cell_\bullet(P_n)$ to the subdivision of $\Cell_\bullet(P_{n+1})$. The remainder of the curves of $P_n$ will be forced to map into the interior of cells in $P_{n+1}$, but we will be able to identify which cells contain their images. 

Occasionally it will be useful to refer to our basis vectors $\{x, y, z\}$ as $\{e_0, e_1, e_2\}$. When we write $[n]_3$ we are referring to the reduction of $n$ modulo 3. 

We require two more pieces of notation in order to understand how the maps in the directed system interact with the cell complex. First, we need to modify permutations to reflect the insertion of vertices into the plumbers' maps.

\begin{defn} \label{def:jmaps}
Let $k\in\{1\dots n-1\}$ and $\sigma \in \Sigma_{n-1}$. Define  $j_{k, k+1}(\sigma)\in\Sigma_{n}$ by

\begin{displaymath}
j_{k, k+1}(\sigma)(i)=
\begin{cases}
\hat{\sigma}[k](i)&i < \sigma^{-1}(a) + \lfloor \frac{1}{2}(\sigma^{-1}(k) + \sigma^{-1}(k+1)\rfloor)\\
k+1&i = \sigma^{-1}(a) + \lfloor \frac{1}{2}(\sigma^{-1}(k) + \sigma^{-1}(k+1)\rfloor)\\
\hat{\sigma}[k](i-1)&i > \sigma^{-1}(a) + \lfloor \frac{1}{2}(\sigma^{-1}(k) + \sigma^{-1}(k+1)\rfloor).
\end{cases}
\end{displaymath}
\noindent where $\hat{\sigma}[k] : [n-1] \to [n]$ by
\begin{displaymath}
\hat{\sigma}[k](i)=
\begin{cases}
\sigma(i)& \sigma(i) \leq k\\
\sigma(i)+1& \sigma(i) > k.
\end{cases}
\end{displaymath}
\noindent and $a = \min\{\sigma^{-1}(k), \sigma^{-1}(k+1)\}$.
\end{defn}

The permutation $j_{k,k+1}(\sigma)$ is the result of inserting a new vertex at the ``lexicographic midpoint'' of $k$ and $k+1$ in permutation notation. For example, $j_{2,3}(12453) = 125364$. 


Second, we need to define a subdivision of the complex $\Cell_\bullet(P_n)$ and extend our previous naming convention to its cells. Recall that elements of $\Cell_\bullet(P_n)$ are geometrically defined to be the product of three simplices. 

\begin{defn}
Let $\Cell_\bullet^{\;B}(P_n)$ be the complex obtained by barycentric subdivision of elements of $\Cell_\bullet(P_n)$ separately in all three coordinates. 
\end{defn}

In the simplex in $\I^{(n-1)}$ which respects $\sigma_x$, the codimension one subset in which $v_j^x$ is the average of its two neighboring coordinates is a union of faces in the barycentric subdivision, as in Figure \ref{fig:bary}. We will require the following notation for such elements in $\Cell_\bullet^{\;B}(P_n)$.

\begin{eqnarray*}
e(\sigma_x, \sigma_y, \sigma_z; \langle j\rangle_x)&=&\{v \in e(\sigma_x, \sigma_y, \sigma_z) |\\&&\quad v_j^x = \frac{1}{2} (v_{\sigma_x(\sigma_x^{-1}(j)-1)}^x + v_{\sigma_x(\sigma_x^{-1}(j)+1)}^x)\}
\end{eqnarray*}

The cell $e(\sigma_x, \sigma_y, \sigma_z; \langle j\rangle_x)$ is a product of the simplicies represented by $\sigma_y$ and $\sigma_z$ with these subdivision faces. Since this cell is a subset of an existing cell, we extend our transposition notation to its faces in the obvious manner.

\begin{figure} 
\begin{center}
\psfrag{t1}{$t_1$}
\psfrag{t2}{$t_2$}
\psfrag{C1}{$t_2 = \frac{1}{2}(t_1 + t_3)$}
\psfrag{C2}{$t_1 = \frac{1}{2}(t_0 + t_2)$}
\includegraphics[width=6cm]{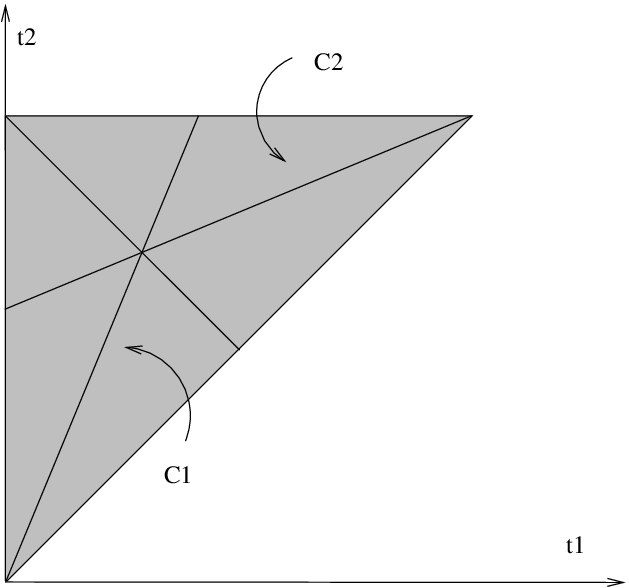}
\caption{Averages of coordinates in barycentric subdivision; here, $t_0 = 0$ and $t_3 = 1$}
\label{fig:bary}
\end{center}
\end{figure}

We now wish to define the maps $\iota_n : P_n \to P_{n+1}$. For generic curves in $P_n$, such a map is simply a selection of a pipe into which to insert a new vertex. In order for the limit of the resulting directed system to provide a model for the space of knots, these maps should insert new vertices in a ``sufficiently distributed" fashion as $n$ varies. One approach to ensuring such a distribution, which we use here to avoid the complexity of an explicit formula, is to fix a function $A\co\N \to \N$ by sampling $\alpha(n)$ from the uniform distribution on $\{1, \dots, n-1\}$ and insert a vertex at the midpoint of the pipe traveling in the $e_{[n]_3}$ direction in the $A(n)$th move, per Figure \ref{fig:iotagt}. We now define the maps on both the plumbers' map and cellular level in parallel. 

For our fixed $A$ and $n$, most plumbers' knots and all singular plumbers' curves can be articulated simply by adding a new vertex in the image of the existing knot in the specified pipe.

\begin{defn} \label{def:good}
Fix a function $A: \mathbb{N} \to \mathbb{N}$ as described above, and let $n \in \mathbb{N}$. Write $\alpha = A(n)+1$.
Let $\phi_v\in P_n$ so that there is some $i$ with $(v_\alpha^{[n]_3} - v_i^{[n]_3})(v_{\alpha+1}^{[n]_3} - v_i^{[n]_3}) > 0$. Such points live in cells for which $(\alpha\;\alpha\!+\!1)$ does not appear in $\sigma_{e_{[n]_3}}$. Call any cell in with this property \emph{good} and write $C_\bullet^{\;g}(P_n)$ for the collection of all such.
\end{defn}

We begin by defining the map on good cells. For technical reasons, we assume $n>2$; this restriction is trivial as $P_1$ and $P_2$ are uninteresting.

\begin{defn} \label{def:iotag} Let $A$, $n>2$ and $\alpha$ be as in Definition \ref{def:good}. Let $e\in \Cell_\bullet^{\;g}(P_n)$ and $\phi_v \in e$ and write $\delta_{i,j}$ for the Kronecker delta. Define  $\hat{\iota}^{g}_n(v)$ by
\begin{eqnarray*}(\iota^{g}_n(v))_i&=&\left\{\begin{array}{ll}
v_i&i \leq \alpha\\
v_{i-1}&i > \alpha+1
\end{array}\right.
\end{eqnarray*}
and
\begin{eqnarray*}
(\hat{\iota}^{g}_n(v))_{\alpha+1}^x&=& v_{\alpha+1}^x - \tfrac{1}{2}s(\x_\alpha)||\x_\alpha||\delta_{{[n]_3},0}\\
(\hat{\iota}^{g}_n(v))_{\alpha+1}^y&=&v_{\alpha}^y + \tfrac{1}{2}s(\y_\alpha)||\y_\alpha||[n-1]_3\\
(\hat{\iota}^{g}_n(v))_{\alpha+1}^z&=&v_{\alpha}^z + \tfrac{1}{2}s(\z_\alpha)||\z_\alpha||\delta_{{[n]}_3,2}. \end{eqnarray*}
and define $\iota^g_n(\phi_v) = \phi_{\hat{\iota}^g_n}$.
\end{defn}

The image of the curve $\iota^g_n(\phi_v)$ is the same as that of $\phi_v$, but it contains a new vertex which is coplanar in two dimesions with vertices from the old curve and whose third coordinate is the average of the two endpoints of the pipe. This is geometrically isotopic to a curve in which the third coordinate is inserted instead at the average of those of the two vertices closest to the middle of the pipe. 

Using the $j$ maps of Definition \ref{def:jmaps}, the following is immediate from Definition \ref{def:iotag}.

\begin{lem} Let $e\in\Cell^{\;g}_\bullet(P_n)$. Then the image of $e$ under the induced chain map $(\iota_n^{g})_\# \co \Cell_\bullet^{\;g}(P_n) \to \Cell_\bullet^{\;B}(P_n)$ is

\begin{eqnarray*}
(\iota^{g}_n)_\#(e(\sigma_x, \sigma_y, \sigma_z)) &=& e(j_{\alpha, \alpha+1}\sigma_x, j_{\alpha}\sigma_y, j_\alpha\sigma_z;\\
&& \quad\langle \alpha\!+\!1\rangle_x, (\alpha\; \alpha\!+\!1)_y, (\alpha\; \alpha\!+\!1)_z)..
\end{eqnarray*}

\end{lem}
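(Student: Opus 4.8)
The plan is to compute the image of a generic interior point of $e(\sigma_x,\sigma_y,\sigma_z)$ and simply read off the (in)equalities defining the cell of the barycentrically subdivided $P_{n+1}$ that contains it. The map $\iota^{good}_n$ is injective — one recovers $v$ by deleting the inserted vertex — so \emph{once we know that its image on the open top cell is a single open cell of the subdivided complex}, $(\iota^{good}_n)_\#$ sends that generator to $\pm$ the corresponding generator, and a short orientation check fixes the coefficient to $+1$. Thus the entire content of the lemma is to establish that the image is one cell and to name it. I would take $[n]_3=0$, so the split pipe is the $x$-pipe $\x_\alpha$ and $x$ is the \emph{active} coordinate; the residues $[n]_3=1,2$ then follow by the cyclic symmetry among the coordinates.

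First I would record the vertex bookkeeping common to all three factors. By Definition~\ref{def:iotag} the correspondence on free vertices is $v_j\mapsto w_j$ for $j\le\alpha$ and $v_j\mapsto w_{j+1}$ for $j\ge \alpha+1$, with $w_{\alpha+1}$ the new vertex. Every old index exceeding $\alpha$ is shifted up by one, which is precisely the relabeling $\hat{(\cdot)}[\alpha]$ built into the $j$-maps, applied uniformly in $x$, $y$ and $z$ since it is the same vertex in all three coordinates. It then remains only to locate the single new value $\alpha+1$ in each of the three orders.

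The two passive directions are straightforward and settle two of the three factors. Because the split pipe is $\x_\alpha$, the new vertex lies in the plane $y=v_\alpha^y$, $z=v_\alpha^z$, so $w^y_{\alpha+1}=w^y_\alpha$ and $w^z_{\alpha+1}=w^z_\alpha$ \emph{exactly}. The equality in $y$ forces $\alpha+1$ to be inserted immediately after $\alpha$ in the $y$-order — that is, the $y$-permutation is $j_\alpha\sigma_y$ — and the equality itself is exactly the codimension-one condition $(\alpha\;\alpha\!\!+\!\!1)_y$; the identical argument yields $j_\alpha\sigma_z$ and $(\alpha\;\alpha\!\!+\!\!1)_z$.

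The active direction is the crux, and it is here that the good hypothesis is essential. The new vertex sits at the midpoint of $\x_\alpha$, strictly between $v^x_\alpha$ and $v^x_{\alpha+1}$; because $e$ is good, $(\alpha\;\alpha\!\!+\!\!1)$ does not appear in $\sigma_x$, so $\alpha$ and $\alpha+1$ are non-adjacent in $\sigma_x$ and some vertex has $x$-coordinate strictly between them. The difficulty is that the rank of the \emph{geometric} midpoint among the $x$-coordinates need not be constant on $e$ — the midpoint may cross an intervening vertex — so the midpoint placement alone does not land in one unsubdivided cell. I would resolve this exactly as the surrounding text indicates: pass to the homotopic representative that places the new $x$-coordinate at the average of its two rank-neighbors among the existing $x$-coordinates. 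The things to verify are then (i) that this homotopy does not alter the induced chain map, (ii) that across the \emph{entire} good cell the image thereby lies in a single cell of the barycentric subdivision, (iii) that the averaging relation is literally the face condition $\langle \alpha\!\!+\!\!1\rangle_x$, and (iv) that the rank at which $\alpha+1$ is inserted is the floor-average of the ranks of $\alpha$ and $\alpha+1$, i.e. the position produced by $j_{\alpha,\alpha+1}$. Assembling the three factors gives the named cell, and a dimension count closes the argument: it carries one barycentric and two transposition conditions, hence codimension $3$ in $P_{n+1}$ and dimension $3n-3=\dim e$, as required. The principal obstacle throughout is this reconciliation of the non-constant geometric rank with a single combinatorial cell via the barycenter-consistent representative; the remaining steps are index bookkeeping.
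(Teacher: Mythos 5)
Your proposal is correct and takes essentially the same route as the paper, which offers no separate proof of this lemma beyond the preceding discussion: the $y$ and $z$ equalities force $j_\alpha\sigma_y$, $j_\alpha\sigma_z$ with the transposition faces $(\alpha\;\alpha\!+\!1)_y$, $(\alpha\;\alpha\!+\!1)_z$, while the $x$-coordinate issue is resolved exactly as you describe, by passing to the homotopic curve whose inserted coordinate is the average of its order-neighbors so that the map becomes cellular into the barycentric subdivision. Your identification of the crux --- that the geometric midpoint's rank is not constant across a good cell --- is precisely what the paper's remark that the image ``is homotopic to a curve in which the third coordinate is inserted instead at the average of those of the two vertices closest to the middle of the pipe'' is meant to address, so your writeup is a faithful (indeed more explicit) elaboration of the paper's justification.
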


\begin{figure} 
\begin{center}
\psfrag{C1}{$v_{\alpha}$}
\psfrag{C2}{$v_{\alpha+1}$}
\psfrag{C4}{$\hat{\iota}^{g}_n(v)_{\alpha}$}
\psfrag{C3}{$\hat{\iota}^{g}_n(v)_{\alpha+1}$}
\psfrag{C5}{$\hat{\iota}^{g}_n(v)_{\alpha+2}$}
\includegraphics[height=3cm]{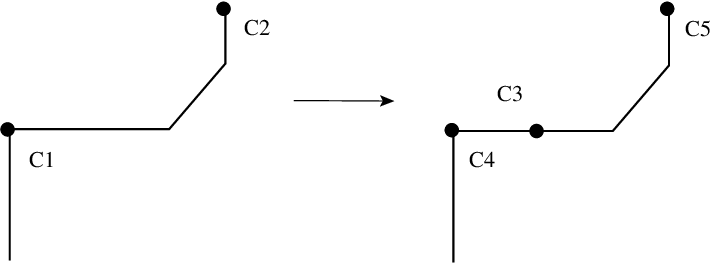} 
\caption{$\iota^{g}_n$}
\label{fig:iotagt}
\end{center}
\end{figure}

However, if the pipe into which we are to insert our prescribed vertex has zero length, such a map will insert a vertex coincident to an existing one. This poses no problem in the discriminant, but would result in a knot mapping to a singular curve. For example, if $v_i = (v_i^x, v_i^y, v_i^z)$, and $v_{i+1} = (v_i^x, v_i^y, v_{i+1}^z)$, attempting to apply $\iota^{g}_n$ to insert a vertex in either of $\x_i$ or $\y_i$ would result in $\hat{\iota}^{g}_n(v)_i = \hat{\iota}^{g}_n(v)_{i+1} = v_i$, so $\iota^{g}_n(\phi_v)\left(\frac{3i+1}{n}\right) = \iota^{g}_n(\phi_v)\left(\frac{3i+4}{n}\right)$, an intersection of distant pipes. Notice that this issue only arises if a particular pair of consecutive vertices specified by $A$ and $n$ in the curve lie in the same coordinate plane. 

\begin{defn}
Let $A$, $n$ and $\alpha$ be as in Definition \ref{def:good}. A cell of the form \mbox{$e(\sigma_x, \sigma_y, \sigma_z; (\alpha \; \alpha\!+\!1)_{e_{[n]_3}}) \in \Cell_\bullet(P_n)$} is called \emph{$\alpha$-planar}. Write $\Cell^{\;p}_\bullet(P_n)$ for the family of all such cells.
\end{defn}

We will resolve this issue by ``borrowing'' length from the two pipes which neighbor the zero-length pipe into which we are to insert the vertex, at least one of which is guaranteed to have non-zero length if the curve is non-singular.  In some cases, this requires changing the coordinates of the two existing neighboring vertices in a manner which does not change the topological knot type of the image, as in Figure \ref{fig:iotaa}. Thus, we limit our deformation of the knot in either borrowing direction to half the distance to the closest pipe, which must run perpendicular to the plane in which the two vertices occur.

We make the following sequence of definitions for $e_{[n]_3} = x$. Similar definitions are assumed for $y$ and $z$.

\begin{defn} \label{def:iotaa} Let $A$, $n$ and $\alpha$ be as in Definition \ref{def:good}. Let $e \in \Cell_\bullet^{\;p}(P_n)$ and $\phi_v \in e$. Define $\hat{\iota}^p_n(v)$ by
\begin{eqnarray*}(\hat{\iota}^p_n(v))_i&=&\left\{\begin{array}{ll}
v_i&i \leq \alpha-1\\
(v_{\alpha}^x, v_{\alpha}^y, \tfrac{1}{2}(v_{\alpha}^z + v^z_{\sigma_z (\sigma_z^{-1}(\alpha) + s(\z_{\alpha-1}))}))& i = \alpha\\
(v_{\alpha}^x, \tfrac{1}{2}(v_{\alpha}^y + v^y_{\sigma_y (\sigma_y^{-1}(\alpha) + s(\y_{\alpha}))}), v_\alpha^z) &i = \alpha+1\\
v_{i-1}&i \geq \alpha+2.
\end{array}\right.
\end{eqnarray*}
and $\iota^p_n(\phi_v) = \phi_{\hat{\iota}_n^p(v)}$.
\end{defn}

\begin{figure} 
\begin{center}
\psfrag{C1}{$v_{\alpha}$}
\psfrag{C2}{$v_{\alpha+1}$}
\psfrag{C4}{$\hat{\iota}_n^p(v)_{\alpha}$}
\psfrag{C3}{$\hat{\iota}_n^p(v)_{\alpha+1}$}
\psfrag{C5}{$\hat{\iota}_n^p(v)_{\alpha+2}$}
\includegraphics[height=3cm]{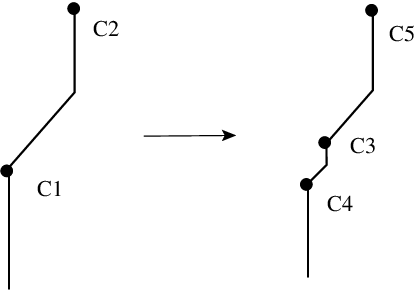} 
\caption{$\iota^P_n$}
\label{fig:iotaa}
\end{center}
\end{figure}

As before, this map descends to a cellular map to the barycentric subdivision of the codomain.

\begin{lem} Let $e\in\Cell_\bullet^{\;p}(P_n)$. Then the image of $e$ under the induced chain map $(\iota_n^p)_\#\co \Cell_\bullet^{\;p}(P_n) \to \Cell_\bullet^{\;B}(P_n)$ is

\begin{eqnarray*}
(\iota_n^P)_\#(e(\sigma_x, \sigma_y, \sigma_x; (\alpha\; \alpha\!\!+\!\!1)_x)) &=&e(j_{\alpha, \alpha+1}\sigma_x, j_\alpha\sigma_y, j_\alpha\sigma_z;\\
&& \quad (\alpha\;\alpha\!+\!1)_x\langle \alpha\!+\!1\rangle_x, \langle\alpha\!+\!1\rangle_y, \langle\alpha\!+\!1\rangle_z).
\end{eqnarray*}
\end{lem}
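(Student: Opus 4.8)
The plan is to track, coordinate by coordinate, exactly how the pointwise map $\iota_n^P$ of Definition \ref{def:iotaa} acts on the defining inequalities of an $\alpha$-planar cell, and to read off the resulting permutations and equalities. Since $\iota_n^P$ inserts a new vertex between indices $\alpha$ and $\alpha+1$, the indices $n-1$ vertex coordinates in each axis become $n$, so the target permutations live in $\Sigma_n$; the combinatorial bookkeeping of this reindexing is precisely what the maps $j_{\alpha,\alpha+1}$ and $j_\alpha$ of Definition \ref{def:jmaps} are built to record. First I would verify the $x$-coordinate behavior. In an $\alpha$-planar cell the transposition $(\alpha\;\alpha\!\!+\!\!1)$ appears in $\sigma_x$, so $v^x_\alpha = v^x_{\alpha+1}$ already holds; the map sets $(\iota_n^P(v))^x_\alpha = (\iota_n^P(v))^x_{\alpha+1} = v^x_\alpha$, so both new vertices share the common $x$-value. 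This is exactly the situation encoded by the equality block $(\alpha\;\alpha\!\!+\!\!1)_x$ together with the barycentric face $\langle\alpha\!\!+\!\!1\rangle_x$: the $x$-permutation is $j_{\alpha,\alpha\!\!+\!\!1}\sigma_x$, which inserts $\alpha+1$ lexicographically in the middle, and the two conditions jointly assert the collapse of three $x$-coordinates into the appropriate coplanar configuration.

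Next I would handle the $y$- and $z$-coordinates symmetrically, which is where the barycentric faces $\langle\alpha\!\!+\!\!1\rangle_y$ and $\langle\alpha\!\!+\!\!1\rangle_z$ come from. By the definition of $\iota_n^P$, the new vertex indexed $\alpha$ has $z$-coordinate $\tfrac12(v^z_\alpha + v^z_{\sigma_z(\sigma_z^{-1}(\alpha)+s(\z_{\alpha-1}))})$, i.e. the average of $v^z_\alpha$ with its neighbor in the direction of the pipe $\z_{\alpha-1}$; likewise the vertex indexed $\alpha+1$ carries the averaged $y$-coordinate. Comparing these formulas with the definition of $e(\sigma_x,\sigma_y,\sigma_z;\langle j\rangle_x)$ — where $v_j^x$ is set to the mean of its two neighbors in the order determined by $\sigma_x$ — shows that the inserted coordinate lies on the codimension-one barycentric face $\langle\alpha\!\!+\!\!1\rangle_y$ (respectively $\langle\alpha\!\!+\!\!1\rangle_z$), and that the underlying permutations are $j_\alpha\sigma_y$ and $j_\alpha\sigma_z$, since for these axes the new vertex is inserted immediately following $\alpha$ rather than in the middle. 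The point to check carefully is that the sign $s(\z_{\alpha-1})$ (resp. $s(\y_\alpha)$) selects the correct neighbor, so that the averaged value indeed falls strictly between $v_\alpha$ and its successor in the $\sigma_z$-order and thus lands on the barycentric subdivision rather than outside the cell; this uses that the relevant pipe has nonzero length, which is guaranteed because the cell is $\alpha$-planar and not good.

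Having identified the image of a generic point $v\in e$, I would conclude that $(\iota_n^P)_\#$ sends the cell to the claimed cell by the general principle, established earlier in the excerpt, that $\iota_n^P$ is cellular into the barycentric subdivision of the codomain: a cellular map is determined on a cell by where it sends the interior, and the equality pattern computed above is constant on the interior of $e$ (the inequalities and equalities defining the image are preserved throughout, exactly as in Lemma \ref{lem:singinface}). The main obstacle I anticipate is the $x$-axis accounting: reconciling the simultaneous appearance of the \emph{equality} block $(\alpha\;\alpha\!\!+\!\!1)_x$ and the \emph{barycentric face} $\langle\alpha\!\!+\!\!1\rangle_x$ in a single coordinate, and confirming that $j_{\alpha,\alpha\!\!+\!\!1}$ (middle insertion) rather than $j_\alpha$ (adjacent insertion) is forced here precisely because the preimage already satisfies $v^x_\alpha = v^x_{\alpha+1}$. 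Everything else is a direct but tedious substitution into the coordinate formulas of Definitions \ref{def:jmaps} and \ref{def:iotaa}, which I would not write out in full.
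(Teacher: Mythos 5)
Your overall strategy --- substituting the formulas of Definition \ref{def:iotaa} into the defining inequalities of the cell and then invoking cellularity into the barycentric subdivision --- is the right (and presumably intended) one, since the paper states this lemma without proof, and your $x$-axis analysis is correct: the three image vertices $\alpha$, $\alpha+1$, $\alpha+2$ share the value $v_\alpha^x$, which is exactly what $(\alpha\;\alpha\!+\!1)_x\langle\alpha\!+\!1\rangle_x$ encodes relative to $j_{\alpha,\alpha+1}\sigma_x$. The genuine gap is in your $y$--$z$ step. Definition \ref{def:iotaa} is \emph{not} symmetric in $y$ and $z$, but you argue as though it were: the averaged $y$-coordinate is carried by the new vertex $\alpha+1$, whereas the averaged $z$-coordinate is carried by the vertex indexed $\alpha$, the new vertex $\alpha+1$ inheriting the old value $v_\alpha^z$ unchanged. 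You record these formulas correctly, then conclude that both coordinates land on faces of type $\langle\alpha\!+\!1\rangle$. For $z$ this does not follow: the condition that actually holds on the image is that the coordinate of index $\alpha$ is the midpoint of its two order-neighbours (a face of type $\langle\alpha\rangle_z$), while the condition $\langle\alpha\!+\!1\rangle_z$ --- which would force $v_\alpha^z$ to be the midpoint of the averaged value and the next occupied plane --- fails generically on the image. So an honest run of your own computation produces a cell name in the $z$-coordinate different from the one in the statement; a correct write-up must either reconcile the two (by a renaming/identification argument) or flag the discrepancy, and yours does neither.

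Second, the one subtlety you do flag --- that $s(\y_\alpha)$ and $s(\z_{\alpha-1})$ must ``select the correct neighbor'' --- is dismissed with a justification that does not address it. Nonvanishing of the relevant pipe lengths only gives $s(\y_\alpha), s(\z_{\alpha-1}) \neq 0$, i.e.\ that the average lies strictly between two distinct coordinate planes; it says nothing about \emph{which side} of $v_\alpha$ the averaged value falls on. Chasing the sign convention: $s(\y_\alpha)=1$ exactly when $v_\alpha^y > v_{\alpha+1}^y$, and then the chosen neighbour is the next-larger plane, the new vertex follows $\alpha$ in the $y$-order, and the image respects $j_\alpha\sigma_y$ as claimed; but when $s(\y_\alpha)=-1$ the chosen neighbour is the next-smaller plane, the new vertex \emph{precedes} $\alpha$, and the image respects $(\alpha\;\alpha\!+\!1)j_\alpha\sigma_y$ instead. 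Both signs occur among $\alpha$-planar cells, since the sign is determined by whether $\alpha+1$ precedes or follows $\alpha$ in $\sigma_y$ (similarly for $s(\z_{\alpha-1})$ via $\sigma_z$ and $\alpha-1$). A complete proof therefore requires a case analysis on these signs, with either a corrected target name or an identification of names in the unfavourable cases; none of this appears in your argument. Your closing worry about the $x$-axis accounting is misplaced --- that is the part that works cleanly; all of the delicate bookkeeping, and the actual failure of your argument, is in $y$ and $z$.
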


It now remains to continuously define a map on the remaining cells that continuously extends $\iota^{g}_n$ and $\iota^{p}_n$..

\begin{defn}
All cells which are neither good nor $\alpha$-planar are \emph{interpolating}. Write $C_\bullet^{\;I}(P_n)$ for the set of all interpolating cells.
\end{defn}

Interpolating cells are those for which $(\alpha \;\alpha+1)$ appears in $\sigma_{[n]_3}$ but which are not in the closure of an $\alpha$-planar cell. When within such a cell, rather than simply inserting a vertex we will also slightly perturb the vertices using the borrowing construction from Definition \ref{def:iotaa} in a fashion which continuously interpolates between the maps on good cells and those on $\alpha$-planar ones. See Figure \ref{fig:iotaI} for an example of such an interpolating map, comparing with Figure \ref{fig:iotagt} and Figure \ref{fig:iotaa}.

\begin{defn} \label{def:iotaI} Let $A$, $n$ and $\alpha$ be as in Definition \ref{def:iotag}, $e\in\Cell_\bullet^{\;I}(P_n)$ and $\phi_v \in e$. 

The \emph{interpolating parameter} for $v$, $p(v)$, is the ratio of the distance between $v_\alpha^x$ and $v_{\alpha+1}^x$ to the maximal distance between this pair of vertices in the collection of maps within the cell $e$ for which all other vertices agree with those of $\phi_v$:
\begin{eqnarray*}
p(v)&=& \frac{||\x_\alpha||}{|v^x_{\sigma_x(\sigma_x^{-1}(\alpha) - s(\x_\alpha)))}-v^x_{\sigma_x(\sigma_x^{-1}(\alpha+1) + s(\x_\alpha)))}|}
\end{eqnarray*}
\noindent Define $\iota^I_n(v)$ by
\begin{eqnarray*}(\iota^0_n(v))_i&=&\left\{\begin{array}{ll}
v_i&i \leq \alpha-1\\
(v_{\alpha}^x, v_{\alpha}^y, v_{\alpha}^z - \tfrac{p(v)}{2}(v_{\alpha}^z - v^z_{\sigma_z (\sigma_z^{-1}(\alpha) + s(\z_{\alpha-1}))}))& i = \alpha\\
(v_{\alpha}^x, v_{\alpha}^y - \tfrac{p(v)}{2}(v_{\alpha}^y - v^y_{\sigma_y (\sigma_y^{-1}(\alpha) + s(\y_{\alpha}))}), v_\alpha^z) &i = \alpha+1\\
v_{i-1}&i \geq \alpha+2.
\end{array}\right.
\end{eqnarray*}
\end{defn}

Only $\iota^I_n$ fails to be cellular in any sense, though the image of an interpolating cell is contained in a cell we can identify. 

\begin{lem} Let $\alpha = \alpha(n)+1$ and $e\in \Cell_\bullet^{\;I}(P_n)$. Thought of as a subset of $P_n$, the image of $e$ under $\iota^I_n$ satisfies
\begin{eqnarray*}
\iota^I_n(e(\sigma_x, \sigma_y, \sigma_x)) &\subseteq&e(j_{\alpha, \alpha+1}\sigma_x, j_\alpha\sigma_y, j_\alpha\sigma_z; \langle \alpha+1\rangle_x).
\end{eqnarray*}
\end{lem}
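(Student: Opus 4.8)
The plan is to prove the inclusion pointwise: fix an arbitrary $v$ in the interpolating cell $e(\sigma_x,\sigma_y,\sigma_z)$, write $w=\iota^I_n(v)$ using Definition \ref{def:iotaI}, and verify directly that $w$ satisfies each defining (in)equality of the target cell $e(j_{\alpha,\alpha+1}\sigma_x, j_\alpha\sigma_y, j_\alpha\sigma_z;\langle\alpha+1\rangle_x)$. Because $\iota^I_n$ is not cellular, I only seek the containment $\subseteq$: the image is a lower-dimensional subset of the named cell, so it suffices that every image point respect the three permutations in the three coordinate directions and lie on the single barycentric face $\langle\alpha+1\rangle_x$. I would organize the verification by coordinate, and within each coordinate split the indices into the three ranges appearing in Definition \ref{def:iotaI}: the unperturbed block $i\le\alpha-1$, the two new vertices at $i=\alpha$ and $i=\alpha+1$, and the shifted block $i\ge\alpha+2$.

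First I would dispatch the unperturbed and shifted blocks. For $i\le\alpha-1$ the coordinates of $w$ are exactly those of $v$, and for $i\ge\alpha+2$ they are those of $v$ reindexed by $i\mapsto i-1$; in either case the mutual order among these coordinates is inherited verbatim from $v$. This reindexing by one is precisely the relabeling $\hat\sigma[\alpha]$ that Definition \ref{def:jmaps} builds into each of $j_{\alpha,\alpha+1}$ and $j_\alpha$, so on these indices the orders of $w$ in $x$, $y$, and $z$ already agree with $j_{\alpha,\alpha+1}\sigma_x$, $j_\alpha\sigma_y$, and $j_\alpha\sigma_z$ with no further argument. What remains is to place the two new vertices $w_\alpha, w_{\alpha+1}$ correctly in each order.

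The $x$-direction is where the defining hypothesis of an interpolating cell is used. Since $e$ is interpolating, $(\alpha\;\alpha+1)$ appears in $\sigma_x$, so $v_\alpha^x$ and $v_{\alpha+1}^x$ are consecutive in the $x$-order with no vertex having $x$-coordinate strictly between them. The inserted vertex $w_{\alpha+1}$ carries the $x$-coordinate of the midpoint $\tfrac12(v_\alpha^x+v_{\alpha+1}^x)$ of the pipe $\x_\alpha$, which therefore lies strictly between these two consecutive values; this is exactly the middle insertion effected by $j_{\alpha,\alpha+1}$, and because $w_\alpha^x$ and $w_{\alpha+2}^x$ are precisely the two $x$-neighbors of $w_{\alpha+1}$ in the resulting order, that midpoint equals the average of its neighbors, which is the condition defining the face $\langle\alpha+1\rangle_x$. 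In the $y$- and $z$-directions the relevant coordinates $w_{\alpha+1}^y$ and $w_\alpha^z$ are displaced from $v_\alpha^y,v_\alpha^z$ toward a single neighbor, selected by the signs $s(\y_\alpha)$ and $s(\z_{\alpha-1})$, by the factor $\tfrac{p(v)}2$. Since $p(v)$ is a ratio to the maximal in-cell distance we have $p(v)\le1$, so each displacement is at most half the gap to that neighbor; the perturbed coordinate thus remains strictly on the $v_\alpha$ side of the neighbor and strictly off $v_\alpha$ itself. Hence $\alpha+1$ is inserted immediately adjacent to $\alpha$ on the selected side, realizing $j_\alpha\sigma_y$ and $j_\alpha\sigma_z$, and no unintended equality of coordinates is produced.

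The step I expect to be the main obstacle is the last one: pinning down that the neighbor singled out by $s(\y_\alpha)$ and $s(\z_{\alpha-1})$ is the one on the side where $j_\alpha$ places $\alpha+1$ (immediately following $\alpha$) rather than the opposite side, and that the half-gap bound strictly precludes any crossing uniformly as $v$ ranges over the cell and $p(v)$ over $(0,1]$. This is a matter of chasing the orientation conventions of the sign function against the insertion conventions of the $j$-maps rather than any genuine difficulty, and it is the same bookkeeping that reconciles $\iota^I_n$ with $\iota^{good}_n$ and $\iota^P_n$ at the ends of the interpolating regime so that the three pieces of $\iota_n$ fit together. Once the directions are matched, every image point lies in the asserted cell, and since $v$ was arbitrary the containment of $\iota^I_n(e(\sigma_x,\sigma_y,\sigma_z))$ follows.
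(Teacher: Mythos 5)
The paper states this lemma with no proof at all (like the two image lemmas for $\iota^{good}_n$ and $\iota^P_n$ preceding it), so there is no argument of record to compare against; your proposal has to stand on its own against Definitions \ref{def:jmaps}, \ref{def:iotaa} and \ref{def:iotaI}. Your overall plan --- fix $v$ in the cell, set $w=\iota^I_n(v)$, split indices into the unperturbed block $i\leq\alpha-1$, the two inserted vertices, and the shifted block $i\geq\alpha+2$, and check the defining (in)equalities of the target cell coordinate by coordinate --- is the natural and essentially the only way to argue, and your handling of the two outer blocks via the relabelling $\hat\sigma[\alpha]$ is fine.

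The gap is in the step that carries the whole lemma. You assert that the inserted vertex $w_{\alpha+1}$ has $x$-coordinate $\tfrac12(v_\alpha^x+v_{\alpha+1}^x)$, the midpoint of $\x_\alpha$. That is not what Definition \ref{def:iotaI} says: as printed, both $(\iota^I_n(v))_\alpha$ and $(\iota^I_n(v))_{\alpha+1}$ have first coordinate $v_\alpha^x$ (the formula is structurally that of $\iota^P_n$, only with the factor $\tfrac{p(v)}{2}$ in the $y$ and $z$ entries). On an interpolating cell $v_\alpha^x\neq v_{\alpha+1}^x$, so under the printed map the image satisfies $w_\alpha^x=w_{\alpha+1}^x\neq\tfrac12(w_\alpha^x+w_{\alpha+2}^x)$: it lies on the face $(\alpha\;\alpha+1)_x$ of $e(j_{\alpha,\alpha+1}\sigma_x,j_\alpha\sigma_y,j_\alpha\sigma_z)$ and never on the barycentric face $\langle\alpha+1\rangle_x$, so the lemma as printed is false for the map as printed. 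Your proof therefore concerns an emended map --- midpoint insertion in $x$, with a buckling factor that tends to $0$ at interfaces with good cells and to $\tfrac12$ at the $\alpha$-planar face --- and that emendation is indeed the only reading compatible with continuity of $\iota_n$ and with the image lemmas for $\iota^{good}_n$ and $\iota^P_n$; but a blind proof must state and justify this correction, not attribute to the definition a formula it does not contain. Relatedly, the direction-matching you defer as mere bookkeeping is exactly where the conclusion can change: with the paper's convention that $s(\y_\alpha)=+1$ precisely when $v_\alpha^y>v_{\alpha+1}^y$, the printed neighbor $\sigma_y(\sigma_y^{-1}(\alpha)+s(\y_\alpha))$ lies above $v_\alpha^y$ only when $\y_\alpha$ is decreasing; when $\y_\alpha$ is increasing, the inserted vertex lands immediately \emph{below} $\alpha$ in the $y$-order, so the image respects $(\alpha\;\alpha+1)\,j_\alpha\sigma_y$ rather than $j_\alpha\sigma_y$. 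Finally, your strictness claims need $p(v)$ to avoid its endpoint values, which holds only on top-dimensional interpolating cells; on interpolating cells of positive codimension the displacements degenerate and the image lies only in the closure of the named (open) cell, so the statement you verify should be restricted to top-dimensional $e$ or weakened to closures.
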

\begin{figure} 
\begin{center}
\psfrag{C1}{$v_{\alpha}$}
\psfrag{C2}{$v_{\alpha+1}$}
\psfrag{C4}{$\hat{\iota}^I_n(v)_{\alpha}$}
\psfrag{C3}{$\hat{\iota}^I_n(v)_{\alpha+1}$}
\psfrag{C5}{$\hat{\iota}^I_n(v)_{\alpha+2}$}
\includegraphics[height=3cm]{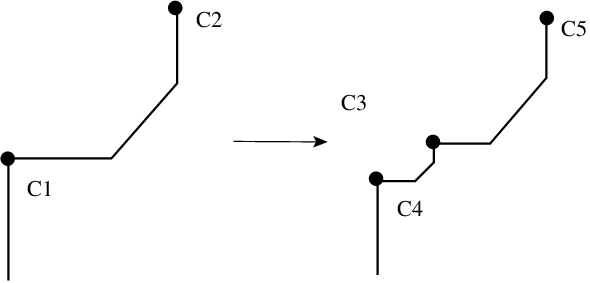} 
\caption{$\iota^I_n$}
\label{fig:iotaI}
\end{center}
\end{figure}

Finally, we can define the desired map $\iota_n : P_n \to P_{n+1}$.

\begin{defn} \label{def:iota}
Define $\iota_n$ to be the union of the maps $\iota_n^{\;g}, \iota_n^{\;p}$ and $\iota_n^{\;I}$. For $m > n$, write $\iota_{n, m} = \iota_{m-1} \circ \iota_{m-2} \circ \dots \circ \iota_n$.  Let $\iota_{n, \infty}\co K_n \to \dirlim K_n$ be the induced map and similarly for $S_n$.  By convention, $\iota_{n, n}$ is the identity. 
\end{defn}

By abuse, as it will be clear from context, we write $\iota_n$ for its restrictions to either $K_n$ and $S_n$.

Now, as we have been careful in our definitions to ensure that knots are taken to knots and singular curves to singular curves in an injective manner, we have

\begin{lem} \label{lem:dirsys} $\iota_n\co K_n\into K_{n+1}$ and $\iota_n\co S_n \into S_{n+1}$.
\end{lem}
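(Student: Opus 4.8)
The plan is to show that the three maps $\iota^{good}_n$, $\iota^P_n$ and $\iota^I_n$ which together constitute $\iota_n$ each carry knots to knots and singular curves to singular curves, and that they agree on overlaps so that the union $\iota_n$ is well-defined and continuous. Since $K_n$ and $S_n$ partition $P_n$, it suffices to verify both inclusions $\iota_n(K_n)\subseteq K_{n+1}$ and $\iota_n(S_n)\subseteq S_{n+1}$; continuity and well-definedness on overlaps are needed first so that ``$\iota_n$'' is actually a map.

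First I would settle the matching of the three pieces. The maps are defined on good, $\alpha$-planar, and interpolating cells respectively, and these three types exhaust $\C_\bullet(P_n)$ by the trichotomy established just before Definition \ref{def:iotaa}. On the overlaps — the closures where an interpolating cell meets an $\alpha$-planar one, and where a good cell meets either — one checks that the interpolating parameter $p(v)$ tends to the appropriate boundary value ($p(v)\to 1$ recovering the borrowing formula of $\iota^P_n$, and $p(v)\to 0$ recovering the unperturbed insertion of $\iota^{good}_n$), so the piecewise definition glues to a continuous map. This is essentially a limit computation from Definitions \ref{def:iotag}, \ref{def:iotaa} and \ref{def:iotaI}, and the phrase ``as we have been careful in our definitions'' signals that the construction was engineered precisely so this holds.

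Next I would prove the two inclusions. For $S_n \into S_{n+1}$, note that each of the three maps leaves the image of the underlying curve unchanged except for inserting a new vertex along an existing pipe (for $\iota^{good}_n$) or perturbing vertices within a coordinate plane by an amount bounded by half the distance to the nearest perpendicular pipe (for $\iota^P_n$ and $\iota^I_n$). An existing distant-pipe intersection in $\phi_v$ therefore persists in $\phi_{\iota_n(v)}$: the image set still self-intersects, and the inserted vertex only subdivides pipes into pipes that remain distant in the sense of the distance-three convention, so the singularity survives. For $K_n \into K_{n+1}$ the content is that no \emph{new} self-intersection is created. The good case is immediate since $\iota^{good}_n$ preserves the image of the curve and merely adds an interior vertex to a positive-length pipe. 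The real work is the $\alpha$-planar and interpolating cases, where the borrowing construction is invoked precisely to avoid creating a degenerate (zero-length) pipe that would coincide with an existing vertex, as in the worked example preceding Definition \ref{def:iotaa}; here one must check that the deformation, bounded by half the gap to the nearest perpendicular pipe, cannot push any pipe through a distant pipe.

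The main obstacle I expect is this last point: verifying in the $\alpha$-planar and interpolating cases that borrowing length from the two neighboring pipes never produces a spurious distant-pipe intersection. One must confirm that the perpendicular pipe whose distance bounds the perturbation is genuinely the nearest obstruction, that the factor $\tfrac12$ keeps every vertex strictly on the correct side of every plane determined by a distant pipe, and that the non-singularity of $\phi_v$ guarantees at least one of the two borrowing directions has positive available length (so the construction does not stall). This amounts to re-running the intersection criteria from the proof of Lemma \ref{lem:singinface} on the perturbed coordinates and checking each inequality remains strict; it is the one place where the geometric care in Definitions \ref{def:iotaa} and \ref{def:iotaI} must be cashed out rather than merely asserted.
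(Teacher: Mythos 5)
The paper itself offers no argument here: the lemma is declared ``immediate'' from the care taken in the definitions, so your strategy of unwinding Definitions \ref{def:iotag}, \ref{def:iotaa} and \ref{def:iotaI} is exactly the approach the paper implies. But as a proof your proposal has genuine gaps. The central one you name yourself and then defer: the verification that the borrowing perturbations of $\iota^P_n$ and $\iota^I_n$ never create a new intersection of distant pipes \emph{is} the content of $\iota_n(K_n)\subseteq K_{n+1}$, and nothing in your write-up actually runs the intersection criteria of Lemma \ref{lem:singinface} on the perturbed coordinates. A plan that stops at ``this must be cashed out rather than merely asserted'' has not proved the lemma; it has restated it. You also never address injectivity, which is part of the assertion $\iota_n\colon K_n \into K_{n+1}$ and is invoked later in the proof of Theorem \ref{thm:componentscorrect}; it is easy (every coordinate of $v$ is recoverable from $\iota_n(v)$, since only $v_\alpha$ is altered and its three coordinates survive, distributed over the two image vertices $\alpha$ and $\alpha+1$), but it needs saying.

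More seriously, your continuity step would fail if you carried it out against the formulas as printed. You assert that $p(v)\to 1$ recovers $\iota^P_n$ and $p(v)\to 0$ recovers $\iota^{good}_n$. The first half is formula-true: $\iota^I_n$ coincides with $\iota^P_n$ exactly when $p(v)=1$. But $p(v)=0$ is the locus $||\x_\alpha||=0$, which is precisely the $\alpha$-planar face --- the place where $\iota^P_n$, not $\iota^{good}_n$, is in force --- and there $\iota^I_n$ degenerates to inserting an unperturbed duplicate of $v_\alpha$, which disagrees with the borrowing values of $\iota^P_n$. So with the printed definitions the three maps do not glue continuously along $\alpha$-planar faces; for your limit statement to come out right, the factor $\tfrac{p(v)}{2}$ in Definition \ref{def:iotaI} must be read as $\tfrac{1-p(v)}{2}$ (perturb \emph{more} as the pipe shrinks, consistent with the surrounding prose and with the analogous buckling function of Definition \ref{def:fn}). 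Likewise, $\iota^I_n$ never limits to the midpoint insertion of $\iota^{good}_n$ anywhere --- its two new vertices always have $x$-coordinate $v_\alpha^x$, never the midpoint of $\x_\alpha$ --- so agreement along faces shared by good and interpolating cells also requires an argument (or a reinterpretation of which map governs those faces) that you do not supply. In short, the gluing is exactly the point where ``the construction was engineered so that this holds'' cannot be taken on faith: checking it uncovers a correction that must be made before either inclusion can be established.
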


\subsection{The directed system of spaces of plumbers knots}

The spaces $\{K_n\}_{n \in \mathbb{N}}$ form a directed system under the maps $\iota_n$ of Definition \ref{def:iota}. In the colimit, the curves are ``fully articulated'' and, as we will see in Theorem \ref{thm:htpytypecorrect}, the resulting space has the homotopy type of the space of $C^1$ long knots.

It follows from Lemma \ref{lem:dirsys} that if $\phi_v$ and $\phi_w\in K_n$ are geometrically isotopic, so are $\phi_{\iota_n(v)}$ and $\phi_{\iota_n(w)}$. 

\begin{defn}  Let $\phi_v \in K_n$ and $\phi_w\in K_m$. We say $\phi_v$ and $\phi_w$ are \emph{(topologically) isotopic} if there is a path $\bar\Phi_{v, w}\co \I \to \dirlim K_n$ with $\bar\Phi(0) = \iota_{n, \infty}(v)$ and $\bar\Phi(1) = \iota_{m, \infty}(w)$.
\end{defn}

In order to compare these new spaces $K_n$ to more familiar spaces of (long) knots, we will rely on the more familiar spaces of piecewise-linear knots. To facilitate comparison, we will restrict ourselves to a deformation retract of the usual space of such knots: PL maps of exactly $n$ segments such that each segment has as its domain an interval of length $\frac{1}{n}$. Such maps can, as with plumbers' curves, be uniquely identified with their vertices.

\begin{defn}   \label{def:plmap} Let $v\in ({\rm Int}\;\I^3)^{n-1}$, and for $i\in \{0,\dots, n-1\}$ construct linear maps between consecutive $v_i$ as follows. 
\begin{eqnarray*}
\mathbf{\ell}^{\;v}_i(t) \co \left[\frac{i}{n}, \frac{i+1}{n}\right] \to \R^{3} &=& (i+1-nt)v_i + (nt-i)v_{i+1}
\end{eqnarray*}
Each such map is a \emph{segment}. Let $\psi_v\co [0,1] \to \R^3$ be the union of the $\ell_i^v$, so for $t$ in the domain of $\ell_i^v$, $\psi_v(t) = \ell_i^v(t)$. Such a map is a \emph{PL map with n segments}. 
\end{defn}

\begin{defn} Let $v\in ({\rm Int}\;\I^3)^{n-1}$ and $\psi_v$ the corresponding PL map with n segments. $\psi_v$ is \emph{non-singular} if it is injective, in which case we call it a \emph{PL knot with n segments}.
Denote by $L_n$ space of all PL knots with $n$ segments. 
\end{defn}

The maps which make $L_n$ into a directed system are constructed in a manner similar to the maps between the $K_n$, but are much more straightforward because there is no corresponding notion of a knot with a segment of zero length. (As we do not intend to study singular PL curves, we omit discussion pertaining to those curves.) Explicitly, for each $n$ we define a map which inserts a vertex at the midpoint of the $\alpha(n)$th segment (for some appropriate distribution $\alpha$) and reparameterize the map, which we accomplish simply by reconstructing the curve from the new list of vertices.

\begin{defn} Let $A$, $n$ and $\alpha$ be defined as in Definition \ref{def:iotag}. Define a map $\hat{I}_n \co ({\rm Int}\;\I^3)^{n-1} \to ({\rm Int}\;\I^3)^{n}$, by 
\begin{eqnarray*}(\hat{I}_n(v))_i&=&\left\{\begin{array}{ll}
v_i&i < \alpha\\
\frac{1}{2}(v_{i-1} + v_{i})&i = \alpha\\
v_{i-1}&i > \alpha.
\end{array}\right.
\end{eqnarray*}
and a map $I_n\co L_n \to L_{n+1}$ by $I_n(\psi_v) = \psi_{\hat{I}(v)}$.
\end{defn}

It is apparent that $I_{n}$ is injective for all $n$. 

\begin{defn}
For $m > n$, write $I_{n, m} = I_{m-1} \circ I_{m-2} \circ \dots \circ I_n$, and let $I_{n, n}$ be the identity. Write $I_{n,\infty}\co L_n\to \dirlim L_n$ for the induced map.
\end{defn}

Geometric isotopy and (topological) isotopy of PL knots are defined, in an analogous manner as for plumbers' knots, as isotopy at a fixed $L_n$ and after passing to the colimit, respectively.

Let $\mathcal{K}$ be the space of $C^1$ long knots in $\R^3$. With the construction we have exhibited, it is classically known that $\dirlim L_n \simeq \mathcal{K}$. We wish to establish that  $\dirlim K_n \simeq \dirlim L_n$ in order to use the space of plumbers' knots as a model for studying isotopy classes of knots in $\mathcal{K}$. Our method for proving this fact is standard but technically intricate: we produce maps from each directed system to the other which are, up to isotopy, inverses in the limit. 

In order to understand what our maps are doing to components in the limit, it will be convenient to be able to compare the topological isotopy types of the images of PL and plumbers' knots. It is straightforward to approximate with arbitrary precision the image of a plumbers' or PL knot with a non-singular $C^1$-knot by ``smoothing'' corners. Denote such a smoothing by $\widetilde{\phi}_v$ or $\widetilde{\psi}_v$ respectively.

\begin{defn}
Let $g\co K_n \to L_m$ (respectively $h\co  L_m \to K_n$). We say $g$ \emph{respects the isotopy type of $\phi_v$} if $\widetilde{\phi_v}$ and $\widetilde{g(\phi_v)}$ (respectively $\widetilde{\psi}_v$ and $\widetilde{h(\psi_v)}$) are isotopic as $C^1$ knots. If $g$ (or h) respects the knot type of all knots, we say \emph{it respects isotopy types}. 
\end{defn}

We would like to use the naive map that sends a plumbers' knot to a PL knot defined by the same set of vertices. However, not all plumbers' knots have vertices which produce valid PL knots, as plumbers' knots can have pipes of zero length. In order to produce continuous maps, we rely on techniques similar to those used in Section \ref{sec:maps} and ``buckle" knots which have segments whose length is below a particular threshold, as illustrated in Figure \ref{fig:buckle}. Thus, our map will be $f_n:K_n\to L_{3n}$.  

\begin{defn} \label{def:gpd}
Define the \emph{global perturbation distance} $\epsilon: K_n \to \R$ by 
\begin{equation*}
\epsilon(\phi_v) = \frac{1}{10}\min(\epsilon_1(\phi_v), \epsilon_2(\phi_v))
\end{equation*}
where
\begin{eqnarray*}
\epsilon_1(\phi_v) &=&\min\{d(\mathbf{p}, \mathbf{q}) \;|\; \mathbf{p}, \mathbf{q} \in \{\x_i^v, \y_i^v, \z_i^v\}_{i=0}^{n-1}\text{ s.t. }\mathbf{p}, \mathbf{q} \text{ are distant}.\}\\
\epsilon_2(\phi_v) &=&\min\{d(\mathbf{p}, \partial(\I^{3n-3})) \;|\; \mathbf{p} \in \{\x_i^v, \y_i^v, \z_i^v\}_{i=1}^{n-2} \}.
\end{eqnarray*}
Here, distances are measured using the images of pipes.
\end{defn}

That is, the global perturbation distance is a small fraction of the minimum of the distances between distant pipes and between pipes which do not intersect the boundary of the unit cube and that boundary. By moving each point in the image of the knot more than this distance, we are certain that the resulting map will respect isotopy types.

\begin{defn}\label{def:buckling}
Let $\phi_v \in K_n$ and let $\mathbf{p}$ be a pipe in $\phi_v$. Define the \emph{buckling function} for $\mathbf{p}$ by
\begin{eqnarray*}
\beta(\mathbf{p})&=&\left\{\begin{array}{ll}
\sqrt{\frac{\epsilon(\phi_v)^2 - ||\mathbf{p}||^2}{2}}&\epsilon > ||\mathbf{p}||\\
0& \epsilon \leq ||\mathbf{p}||. 
\end{array}\right.\\
\end{eqnarray*}
\end{defn}

The buckling function acts as the borrowing function did before, providing an interpolation which allows us to make our function continuous. As the length of a particular pipe shrinks, we deform the the image of the knot in $L_{3n}$ by moving the vertices at its endpoints into the pipes which neighbor it, as in Figure \ref{fig:buckle}. 

We can now give the map $f_n\co  K_n \to L_{3n}$ obtained by buckling a plumbers' knot. 

\begin{defn}\label{def:fn}
Let $\phi_v \in K_n$. For each $i\in \{0,\dots,n-1\}$ define
\begin{eqnarray*}
(\hat{f}_n( v))_{3i}&=&({v}^x_i+ s(\x_i)||\x_i||\beta(\z_{i-1}),{v}^y_i,{v}^z_i-s(\z_{i-1})||\z_{i-1}||\beta(\x_i)),\\
(\hat{f}_n( v))_{3i+1}&=&({v}^x_i-s(\x_i)||\x_i||\beta(\y_i),{v}^y_i+s(\y_i)||\y_i||\beta(\x_i),{v}^z_i),\\
(\hat{f}_n( v))_{3i+2}&=&({v}^x_i,{v}^y_i- s(\y_i)||\y_i||\beta(\z_i),{v}^z_i+s(\z_i)||\z_i||\beta(\y_i)).
\end{eqnarray*} 
where for notational convenience, we define $\beta(\z_{-1}) = ||\z_{-1}|| = 0$, and let $f_n(\phi_v) = \psi_{\hat{f_n}(v)}$.
\end{defn}

\begin{figure} 
\begin{center}
\psfrag{C1}{$v_{i}$}
\psfrag{C2}{$v_{i+1}$}
\psfrag{C3}{$\hat{f}_n(v)_{3i}$}
\psfrag{C4}{$\hat{f}_n(v)_{3i+1}$}
\psfrag{C5}{$\hat{f}_n(v)_{3i+2}$}
\psfrag{C6}{$\hat{f}_n(v)_{3i+3}$}
\includegraphics[width=7cm]{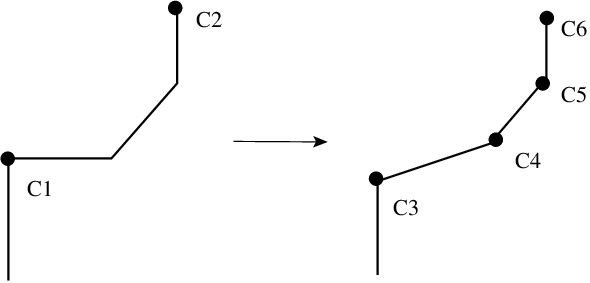}
\caption{Buckling a plumbers' knot to create a PL knot}
\label{fig:buckle}
\end{center}
\end{figure}

The global perturbation distance ensures that
\begin{lem}
$f_n$ respects isotopy types.
\end{lem}

We will make use of the maps $f_n$ in our proof of Theorem \ref{thm:componentscorrect}. We also require maps in the other direction.

\begin{defn} \label{def:an}
Let $w\in L_n$. Define $A_n\co L_n\to K_n$ to be the map which takes the PL knot $\psi_w$ to the plumbers' knot on the same vertex set, $\phi_w$.
\end{defn}

In general, $A_n$ does not preserve topological knot types or even non-singularity. However, in order to ensure that $A_n$ respects knot type it is sufficient to force the segments in $\psi_w$ to be ``short'', and we can accomplish this for a given knot $\psi_w$ by looking at its image under $I_{n, N}$ for large enough $N$. To find such $N$, we require the following measurement.

\begin{defn}
Define the \emph{maximal segment length} function $\delta \co L_n \to \mathbb{R}$ by
\begin{equation*}
\delta(\psi_w)=\frac{1}{10}\min(m_1(\psi_w), m_2(\psi_w)).
\end{equation*}
where
\begin{eqnarray*}
m_1(\psi_w)&=&\min\{d(\ell_i^w, \ell_j^w)\;|\;|i - j| > 1\}\\
m_2(\psi_w)&=&\min\{d(\ell^w_i, \partial(\I^{3n-3})\;|\;i\in \{1,\dots,n-1\}\}
\end{eqnarray*}
\end{defn}

This definition is very similar to that of the global perturbation distance in Definition \ref{def:gpd}. We again wish to deform the image of the knot without changing its topological isotopy type. Specifically, here we think of a segment as the diagonal of a rectangular prism in $\R^3$, and wish to replace the segment with a plumbers' move along the boundary of that prism. If the segment is shorter than the minimal segment length for the knot, then no other edge of the knot passes through the prism, and so this replacement cannot change its topological isotopy type.

\begin{defn}
Let $N(\psi_w)$ be the minimum number so that $\psi_{I_{n, N(\psi_w)}(\psi_w)}$ is a PL knot with the property that no segment has length greater than $\delta(w)$.
\end{defn} 
Clearly, $N(\psi_w)$ is exists and is well-defined, and we have

\begin{lem} $A_{N(\psi_w)} \circ I_{n, N(\psi_w)}$ respects the isotopy type of $\psi_w$.
\end{lem}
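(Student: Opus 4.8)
The plan is to show that the composite $A_{N(w)}\circ I_{n,N(w)}$ produces, from the PL knot $\psi_w$, a plumbers' knot $\phi_{I_{n,N(w)}(w)}$ whose $C^1$-approximation is isotopic to $\psi_{\tilde w}$. The essential point is that $N(w)$ was chosen precisely so that after applying $I_{n,N(w)}$, every segment of the resulting PL knot has length at most $\delta(w)$, and $\delta(w)$ is a tenth of the minimal separation between distant segments and between the vertices and $\partial\I^3$. So first I would record that $I_{n,N(w)}$ does not change the isotopy type: each $I_m$ merely inserts a midpoint vertex, which by injectivity and construction leaves the image set (hence the knot) unchanged up to reparametrization, so $\psi_{I_{n,N(w)}(w)}$ and $\psi_w$ are isotopic and therefore have isotopic $C^1$-approximations.

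Next I would analyze the map $A_{N(w)}$, which replaces each straight segment of the PL knot by a three-pipe staircase on the same pair of endpoints. The key estimate is that the staircase $\phi_w$ stays within distance $\|\ell_i\|$ (the segment length) of the original segment $\psi_w$, since a monotone coordinate-by-coordinate staircase between two points lies in the axis-aligned bounding box of those points, whose diameter is the segment length. Because $N(w)$ forces each segment length below $\delta(w)=\tfrac1{10}\min(m_1,m_2)$, the $C^0$-distance between $\phi_{I_{n,N(w)}(w)}$ and $\psi_{I_{n,N(w)}(w)}$ is bounded by $\delta(w)$ at every parameter. I would then invoke the standard fact that two knots that are $C^0$-close, with the closeness below the minimal self-distance and below the distance to the cube boundary, are ambient isotopic — the straight-line homotopy between corresponding points sweeps out no new self-intersections because the perturbation is smaller than half the minimal distance between distant segments.

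The remaining subtlety is that $\phi_{I_{n,N(w)}(w)}$ must itself be \emph{non-singular}, so that $A_{N(w)}$ actually lands in $K_{N(w)}$ and the comparison to $\psi_{\tilde w}$ via the approximation $\phi_{\widetilde{\phantom{x}}}$ is legitimate. This is where the bound $m_1$ on distant-segment separation does the work: by Lemma \ref{lem:singinface}, a singular plumbers' curve requires two distant pipes to intersect, but every pipe of $\phi_{I_{n,N(w)}(w)}$ lies within $\delta(w)$ of its parent segment, and distant pipes descend from segments that are at distance at least $m_1 = 10\,\delta(w)$ apart, so distant pipes remain separated by at least $m_1 - 2\delta(w) > 0$ and cannot meet. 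The $m_2$ bound similarly guarantees the staircase stays inside $\I^3$.

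I expect the \textbf{main obstacle} to be making the $C^0$-closeness-implies-isotopy step fully rigorous in a way compatible with the paper's $C^1$-approximation formalism: one must check that the chosen smoothings $\widetilde{f_{N(w)}(A_{N(w)}(I_{n,N(w)}(w)))}$ and $\psi_{\tilde w}$ can be compared through a single ambient isotopy rather than merely being individually close to their PL/plumbers' models. The cleanest route is to take all perturbation and smoothing parameters smaller than a common constant (the minimum of $\epsilon$, $\delta(w)$, and the relevant self-distances), so that the three curves $\psi_{\tilde w}$, $\psi_{I_{n,N(w)}(w)}$, $\phi_{I_{n,N(w)}(w)}$, and its smoothing all lie within a single tube of radius less than half the minimal self-distance; then a single straight-line ambient isotopy connects them and the result follows from the definition of ``respects the knot type.''
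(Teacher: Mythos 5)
The paper states this lemma without proof, so there is no argument of record to compare against; what follows assesses your proposal on its own terms. Your first two steps are sound: subdivision preserves the knot (the image of $\psi_{I_{n,N(w)}(w)}$ equals that of $\psi_w$ up to reparametrization), and the bounding-box estimate showing the staircase lies within $\|\ell_i\|\leq\delta(w)$ of its parent segment is correct. The genuine gap is the claim that ``distant pipes descend from segments that are at distance at least $m_1$ apart.'' That is false. By the paper's definition of \emph{distant}, the pipes $\x_i$ and $\y_{i+1}$ are distant (three pipes intervene), yet they descend from the \emph{consecutive} segments $\ell_i$ and $\ell_{i+1}$, which share the vertex $v_{i+1}$ and are at distance zero; $m_1$ is a minimum only over pairs with $|i-j|>1$ and gives no control here. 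This is not repairable by shrinking constants: if three consecutive vertices lie in a common coordinate plane, say $v_i=(0,0,0)$, $v_{i+1}=(\epsilon,\epsilon,0)$, $v_{i+2}=(\tfrac{\epsilon}{2},-\tfrac{\epsilon}{2},0)$, then $\y_{i+1}$ crosses $\x_i$ at the point $(\tfrac{\epsilon}{2},0,0)$, so the plumbers' curve is singular, and the crossing persists under every further midpoint subdivision because all new vertices remain in the plane $z=0$. Hence smallness of segments alone does not even guarantee that $A_{N(w)}\circ I_{n,N(w)}$ lands in $K_{N(w)}$; one needs a general-position hypothesis on the vertices of $w$ (or a perturbation built into the construction) together with an explicit analysis of the corner regions --- which is exactly the part your proof skips, and also the reason the lemma itself requires more care than its statement suggests.

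For the same reason, your ``$C^0$-close implies isotopic'' step is not a standard fact and fails as stated for PL curves: closeness below the minimal self-distance controls only parametrically far pairs of points, while singularities and crossing changes can be created inside arbitrarily small balls around a shared vertex, where the two incident segments are at distance zero. Your straight-line homotopy argument implicitly concedes this, since it partitions pairs $(s,s')$ by parameter distance and then only handles the far pairs via $m_1$. To close the argument you would need to show that throughout the homotopy the curve meets a small ball around each vertex in a single embedded, unknotted arc; this requires using the actual geometry of the staircase near the corner (for non-degenerate consecutive vertices, the two staircases meet only at the shared vertex, and the few axis-parallel sticks involved cannot knot), not merely the metric bound $\delta(w)$. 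That corner analysis is the missing mathematical content of the lemma.
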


\medskip

We finally have sufficient machinery to prove that $\dirlim K_n$ has the proper weak homotopy type. 

\begin{thm} \label{thm:htpytypecorrect}
$\dirlim K_n$ is homotopy equivalent to $\dirlim L_n$.
\end{thm}

However, the technique of the proof is nearly identical in form and content to the proof for $\pi_0$ isomorphism, requiring only more cumbersome notation to we work with families rather than individual maps and the application of compactness to bound the articulation necessary to realize a family, so we only explicitly prove

\begin{thm} \label{thm:componentscorrect} The induced map $f_*\co \pi_0(\dirlim K_n) \to \pi_0(\dirlim L_n)$ is an isomorphism of sets.
\end{thm}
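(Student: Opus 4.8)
The plan is to prove that $f_*$ is an isomorphism of sets by exhibiting an inverse up to isotopy, following the standard directed-system argument that the theorem's preamble announces. I would first assemble the two families of maps already constructed in the excerpt into maps of the direct limits. The maps $f_n : K_n \to L_{3n}$ respect knot types, and the maps $A_{N(w)} \circ I_{n, N(w)} : L_n \to K_{N(w)}$ also respect knot types by the preceding lemma; the heart of the proof is to check that these are compatible enough with the directed systems to descend to mutually inverse maps on $\pi_0$ of the limits. Because $\pi_0$ of a directed limit of spaces is the directed limit of the $\pi_0$'s, it suffices to work at finite stages and verify the required identities only up to isotopy (that is, after applying enough $\iota$'s or $I$'s), which is exactly what ``respects knot types'' is designed to give us.

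Concretely, I would proceed in four steps. First, verify that $f_*$ is well defined on $\pi_0(\dirlim K_n)$: since each $f_n$ respects knot types and isotopy classes are detected after finitely many stabilizations, the square relating $f_n$ and $f_{n+1}$ through $\iota_n$ and the $I$-maps commutes up to isotopy, so $f$ passes to the limit as a well-defined map on components. Second, construct the candidate inverse $g_* : \pi_0(\dirlim L_n) \to \pi_0(\dirlim K_n)$ from the maps $A_{N(w)} \circ I_{n, N(w)}$, checking in the same way that it is well defined on components. Third, show $g_* \circ f_* = \mathrm{id}$: starting from a plumbers' knot $\phi_v \in K_n$, apply $f_n$ to land in $L_{3n}$, then $A_{N} \circ I_{3n, N}$ to return to some $K_N$; the composite respects the knot type of $\phi_v$, and I must confirm that the resulting plumbers' knot is \emph{geometrically} isotopic to $\iota_{n,N}(v)$, not merely topologically isotopic, after stabilizing further. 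Fourth, the symmetric argument gives $f_* \circ g_* = \mathrm{id}$ on the $L$ side.

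The main obstacle will be Step 3 (and its mirror), namely upgrading ``respects knot type'' — which is a statement about $C^1$-isotopy of the smoothed approximations $\psi_{\widetilde{f_n(v)}}$ — into an actual path in the relevant direct limit of knot \emph{spaces}. The subtlety is that $f_n$ buckles short pipes and $A_n$ reinterprets segments as axis-parallel pipes only once segments are short enough, so the round trip does not return $v$ on the nose; I must produce an explicit isotopy in $\dirlim K_n$ connecting $A_N \circ I_{3n,N} \circ f_n(v)$ to $\iota_{n,N'}(v)$ for suitable $N'$. The key leverage is that all the perturbations involved (buckling by $\beta$, borrowing length, barycentric insertion of vertices) are bounded by the global perturbation distance $\epsilon(v)$ or the maximal segment length $\delta(w)$, and hence stay within a tube around the original curve that contains no distant-pipe intersections; a straight-line homotopy through that tube avoids the discriminant and therefore lies in $K_N$. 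I would organize this as a lemma asserting that two knots which agree up to a perturbation smaller than $\epsilon(v)$ are geometrically isotopic after enough stabilizations, and then invoke it for each leg of the round trip.

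Once Step 3 is in hand, the remaining bookkeeping is the routine observation that everything is natural with respect to the stabilization maps, so that the finite-stage isotopies glue into paths in the direct limits; I would relegate those diagram chases to brief remarks rather than spelling out each commuting square.
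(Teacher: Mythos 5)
Your strategy is the paper's strategy: both directions of the correspondence are built from $f_n$ and $A_{N(w)}\circ I_{n,N(w)}$, everything is checked at finite stages of the directed systems, and the ``respects knot types'' lemmas carry the weight. The difference is in how the bijection is closed, and it is worth comparing. The paper proves surjectivity from the triangle comparing $I_{k,3N(w)}$ with $f_{N(w)}\circ A_{N(w)}\circ I_{k,N(w)}$, and then proves injectivity not by your round trip but by lifting an isotopy in $\dirlim L_n$ to a geometric isotopy $\Psi$ at a finite stage, stabilizing by $I_{n,N}$ with $N=\max_t N(\Psi(t))$ so that $A_N$ can be applied to the whole family, and transporting the isotopy to the plumbers' side. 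Read literally, that argument shows that isotopic knots in $\dirlim L_n$ have isotopic images under the candidate inverse $g_*$, i.e.\ that $g_*$ is well defined and is a section of $f_*$; your Step 3 --- that $A_N\circ I_{3n,N}\circ f_n(v)$ is geometrically isotopic, after stabilization, to $\iota_{n,N}(v)$ --- is exactly what is needed to get injectivity of $f_*$ on all of $\pi_0(\dirlim K_n)$ rather than only on the image of $g_*$, and the paper leaves that step implicit. Your proposed perturbation lemma likewise makes explicit what the paper compresses into the sentence ``as $A_n$, $f_n$ and $I_n$ respect the knot type of $\psi_w$, there is a geometric isotopy between the images'': respecting knot types is a statement about $C^1$ isotopy of smoothed approximations, and upgrading it to an actual path in $L_{3N(w)}$ or $K_N$ requires precisely the closeness-to-a-common-tube argument you describe. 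One caution on that lemma: a straight-line homotopy in vertex space need not stay inside the tube when the two curves' vertices sit at different places along the common image (such a vertex travels along a chord of the tube), so you should either match vertices first or build the isotopy by sliding vertices along the curve before interpolating transversally. With that repair, your plan is correct, and it in fact fills in the two thinnest points of the paper's own argument.
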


\begin{proof}

We begin by showing surjectivity of $f_\ast$. Let $\bar{\psi}_w\in \dirlim L_n$ and $\psi_w \in L_k$ so that $I_{k,\infty}(\psi_w) = \bar{\psi}_w$. Write $N = N(\psi_w)$. We require that in following family of diagrams indexed by $\bar{w}$, the images of $\psi_w$ under both paths are geometrically isotopic.
\begin{eqnarray*}
 \xymatrix{
L_k
\ar@/^1pc/[rr]^{I_{k, 3N}}
\ar[r]_{I_{k, N}}
& L_{N}
 \ar[d]_{A_{N}}
& L_{3N} \\
& K_{N}
\ar[ur]_{f_{N}} } 
\end{eqnarray*}
However, as $A_N$, $f_N$ and $I_m$ for each $m$ respect the isotopy type of $\psi_w$, there is a geometric isotopy between the images. Since the $\iota_n$ are injective, this says that for $\bar{\psi}_w \in \dirlim L_n$  there exists $\bar{\phi}_v \in \dirlim K_n$ which maps to the isotopy class of $\bar{\psi}_{w}$ under $f_*$.

Now, let $\bar{\psi}_{w}$ and $\bar{\psi}_{w'}$ be isotopic elements of $\dirlim L_n$. We can lift an isotopy between them to a geometric isotopy at some finite stage, $\Psi_{\psi_w, \psi_{w'}} \co \I \to L_n$. Let $N = \max\{N(\Psi(t)) \;|\; t\in \I\}$, where $N(\psi_w)$ is as in Definition \ref{def:an}. Precompose $\Psi_{\psi_w,\psi_{ w'}}$ by $I_{n, N}$ to produce a geometric isotopy $\hat{\Psi}$ between $I_{n,N}(\psi_{w})$ and $I_{n,N}(\psi_{w})$. Now we can apply $A_N$ to get a geometric isotopy between $(A_N \circ I_{n,N})(\phi_{w})$ and $(A_N\circ I_{n,N})(\phi_{w'})$. Per the proof of surjectivity, under $f_N$ these map to elements geometrically isotopic to $I_{n,3N}(\psi_{w})$ and $I_{n,3N}(\psi_{w'}$ respectively. That is, if $\bar{\psi}_{{w}}, \bar{\psi}_{w'} \in \dirlim L_n$ are isotopic, we can construct an isotopy between elements of $\dirlim K_n$ which map to knots isotopic to $\bar{\psi}_{{w}}$ and $\bar{\psi}_{{w}'}$, so $f_*$ is injective, and thus an isomorphism.

\end{proof}

\section{Relationships with lattice knots and cube diagrams}

We observe that plumbers' knots bear strong resemblance to a number of other discrete knot theories. Two of particular interest are lattice knots and cube diagrams. Lattice knots are studied because they can be used to model physical data like length and thickness of the material from which a knot is constructed. Cube diagrams are used in \cite{Cube} to construct chain complexes of knots with which one can study knot Floer homology. 

\subsection{Lattice knots}
A lattice knot is a PL knot whose segments lie parallel to the coordinate axes and meet one another on points of the integer lattice $\Z^3 \subseteq \R^3$. Clearly, such knots are very closely related to the representative knots of Definition \ref{def:repcurve}. In fact, any representative of a cell of non-singular plumbers' knots can be ''closed" to produce a lattice knot, as in Figure \ref{fig:closeknot}.

\begin{figure} 
\begin{center}
\includegraphics[width=7cm]{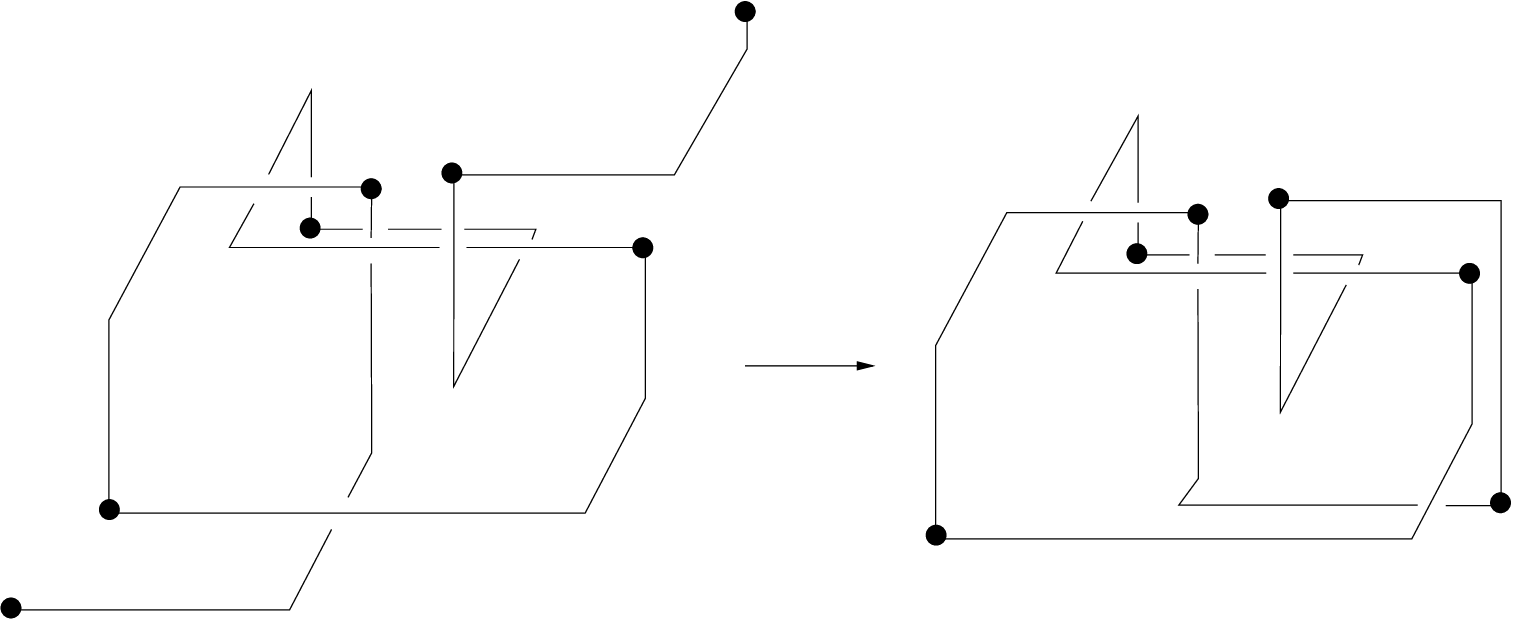}
\caption{Closing a plumbers' knot to obtain a lattice knot.}
\label{fig:closeknot}
\end{center}
\end{figure}

Recall that $L_m$ is the space of $m$-segment piecewise linear knots and suitably modify the definitions given so that maps in $L_m$ have their images in $[0,N]^3$ for some large $N$. Let $Lat_m\subseteq L_m$ be the subspace of lattice knots. 

\begin{defn} \label{def:plumtolat} Let $\phi_v\in P_n$ be a representative knot for the cell $e(\sigma_x, \sigma_y, \sigma_z)$ and define a lattice knot $LK(\phi_v)\in Lat_{3n-1}$ by

\begin{equation*}
\begin{array}{rcl}
LK(v)_0 &=& \left( n+1, n\cdot v_{n-1}^y, 0 \right)\\
LK(v)_1 &=& \left( n\cdot v_1^x, n\cdot v_{n-1}^y, 0 \right )\\
LK(v)_2 &=& \left( n\cdot v_1^x, n\cdot v_1^y, 0 \right )\\
\\
LK(v)_{3k} &=&\left( n\cdot v_{k}^x, n\cdot v_k^y, n\cdot v_k^z \right) \\
LK(v)_{3k+1} &=&\left( n\cdot v_{k+1}^x, n\cdot v_{k}^y, n\cdot v_k^z \right) \\
LK(v)_{3k+2} &=&\left( n\cdot v_{k+1}^x, n\cdot v_{k+1}^y, n\cdot v_{k}^z \right) \\
\\
LK(v)_{3n-3} &=&\left( n\cdot v_{n-1}^x, n\cdot v_{n-1}^y, n\cdot v_{n-1}^z \right) \\
LK(v)_{3n-2} &=&\left( n+1, n\cdot v_{n-1}^y, n\cdot v_{n-1}^z \right) \\
\end{array}
\end{equation*}

where $k$ ranges from $1$ to $n-2$.

\end{defn} 

Each representative knot for a codimension 0 cell of $P_n$ maps to a lattice knot with $3n-1$ segments. However, these knots tend to use more segments than necessary and there is interest in discovering the minimal number of segments required to create a lattice knot of a given topological knot type. Recall that, as in Definition \ref{def:repcurve}, the idea of a representative knot is extensible to cells of any dimension in $\Cell_\bullet(K_n)$. As some of the plumbers' knots what appear in these cells contain zero-length pipes which must be omitted from their image in $Lat_m$, we can study them as a means to find lattice knots with fewer segments.  A pipe has zero length precisely when the two vertices which define its move are in the same appropriate coordinate plane.

\begin{prop} Let $\rho\in \Sigma_n$ and define $\mu(\rho)$ to be the number of pairs of consecutive integers which appear in the same cycle in $\rho$. Let $v\in e(\sigma_x, \sigma_y, \sigma_z; \rho_x, \rho_y, \rho_z)$. The number of zero-length pipes in $\phi_v$ is $\mu(v) = \mu(\rho_x) + \mu(\rho_y) + \mu(\rho_z)$.
\end{prop}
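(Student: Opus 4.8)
The plan is to analyze when a pipe in the representative curve $\phi_v$ of the cell $e(\sigma_x, \sigma_y, \sigma_z; \rho_x, \rho_y, \rho_z)$ has zero length, and to translate the geometric condition ``zero-length pipe'' into the purely combinatorial condition ``consecutive integers in the same cycle of some $\rho_i$.'' Recall from Definition \ref{def:repcurve} that the representative curve assigns to vertex $v_i$ the coordinates
\begin{equation*}
v(e)_i = \left(\frac{E[\rho_x](i)}{n - \ell(\rho_x)}, \frac{E[\rho_y](i)}{n - \ell(\rho_y)}, \frac{E[\rho_z](i)}{n - \ell(\rho_z)}\right),
\end{equation*}
where $E[\rho_x](i)$ returns the index of the cycle in the ordered decomposition of $\rho_x$ containing $i$. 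The first observation I would record is that a pipe $\x_i$ (respectively $\y_i$, $\z_i$) has zero length exactly when $s(\x_i) = 0$, i.e. when its two defining vertices agree in the relevant coordinate: $v_i^x = v_{i+1}^x$ for $\x_i$, and analogously for the others.

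Next I would reduce the count to a single coordinate direction. Since the three coordinates are defined independently through $\rho_x$, $\rho_y$, $\rho_z$, and the pipe $\x_i$ depends only on the $x$-coordinates of $v_i$ and $v_{i+1}$ (and similarly for the others), the total number of zero-length pipes is the sum of the number of zero-length pipes in each direction. This is exactly the decomposition $\mu(v) = \mu(\rho_x) + \mu(\rho_y) + \mu(\rho_z)$ claimed, so it suffices to prove that the number of $i$ with $v_i^x = v_{i+1}^x$ equals $\mu(\rho_x)$, the number of pairs of consecutive integers lying in a common cycle of $\rho_x$.

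The core step is this last equality. By the formula above, $v_i^x = v_{i+1}^x$ if and only if $E[\rho_x](i) = E[\rho_x](i+1)$, i.e. the consecutive integers $i$ and $i+1$ are assigned to the same cycle in the ordered disjoint-cycle decomposition of $\rho_x$. Here I would invoke the structural fact established in the proof of the redundant-labels proposition: a cycle $\rho_x^j$ appearing in $\sigma_x$ acts nontrivially precisely on a block of the form $\{\sigma_x(t), \ldots, \sigma_x(t + \ell(\rho_x^j))\}$, and in the representative-curve construction this block is exactly the set of indices collapsed to a single $x$-coordinate value (the cycle's index). Thus two integers $i, i+1$ share a cycle in the disjoint-cycle decomposition if and only if they are collapsed to the same coordinate, which is precisely the combinatorial quantity $\mu(\rho_x)$ counts. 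The main obstacle is bookkeeping: one must verify that $\mu$ as defined on $\rho_x$ (counting \emph{consecutive integers} in the same cycle) matches the geometric collapsing, rather than counting, say, adjacent \emph{positions} in permutation notation; the cleanest way is to note that $E[\rho_x]$ is constant on exactly the supports of the disjoint cycles, so the count of adjacent equalities $E[\rho_x](i) = E[\rho_x](i+1)$ is precisely the number of consecutive-integer pairs co-occurring in a cycle. Summing the three independent contributions completes the proof.
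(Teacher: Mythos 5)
Your proof is correct, and in fact the paper states this proposition without any proof, so your argument supplies exactly the reasoning left implicit there: a pipe has zero length precisely when its two defining vertices agree in the relevant coordinate, and such an equality holds precisely when those two consecutive indices lie in a common cycle of the corresponding $\rho$, by the block-of-equalities description of cells from the proof of the redundant-labels proposition. The one refinement to make is that the statement concerns an arbitrary $v\in e(\sigma_x,\sigma_y,\sigma_z;\rho_x,\rho_y,\rho_z)$, not only the representative curve, so instead of the explicit formula for $v(e)$ from Definition \ref{def:repcurve} you should appeal to Definition \ref{def:codimkcell} itself: every point of the cell satisfies $v^x_a=v^x_b$ exactly when $a$ and $b$ lie in the same cycle of $\rho_x$, and strict inequality otherwise, so the count of zero-length pipes is constant across the cell and equals what you computed for the representative. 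For completeness you should also note that the pipes meeting the fixed endpoints $v_0$ and $v_n$ are never of zero length, since the outer inequalities $0<v^x_{\sigma_x(1)}$ and $v^x_{\sigma_x(n-1)}<1$ remain strict in every cell; this matches the fact that $\mu$ counts only pairs of consecutive integers inside $\{1,\dots,n-1\}$.
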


Also, notice that when we close an $n$-move plumbers' knot for which $v_1^y = v_{n-1}^y$, there is a zero length segment produced in the closure. This occurs when $1$ and $n-1$ appear in the same cycle in $\rho_y$.

Further, since adjacent pipes can move along the same coordinate axis, it is permissible to omit a vertex in the middle of the segment when we construct the lattice knot.

\begin{prop} Let $\rho\in \Sigma_n$ and let $\nu(x)$ be the number of pairs of consecutive integers which appear in the same cycle in $\rho_y$ and $\rho_z$. Let $v\in e(\sigma_x, \sigma_y, \sigma_z; \rho_x, \rho_y, \rho_z)$. The number of consecutive moves which travel only along single axes in $\phi_v$ is $\nu(v) = \nu(x) + \nu(y) + \nu(z)$.
\end{prop}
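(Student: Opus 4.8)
The plan is to translate the geometric notion of ``two collinear moves in a row'' into a condition on which zero-length pipes separate them, and then to feed that condition into the cycle description of zero-length pipes already used for the preceding proposition.

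First I would record the basic dictionary. For the representative curve of Definition \ref{def:repcurve} one has $v_i^x = v_{i+1}^x$ iff $E[\rho_x](i) = E[\rho_x](i+1)$, i.e.\ iff $i$ and $i+1$ lie in a common cycle of $\rho_x$; hence $\x_i$ has zero length exactly when the consecutive pair $(i,i+1)$ appears in one cycle of $\rho_x$, and symmetrically for $\y_i$, $\z_i$ with $\rho_y$, $\rho_z$. This is the same computation that underlies the count $\mu$ in the previous proposition.

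Next I would isolate the geometric criterion. After the zero-length pipes are deleted, two moves along the $x$-axis become adjacent precisely when the two pipes separating them in the order $\x_0,\y_0,\z_0,\x_1,\dots$, namely $\y_i$ and $\z_i$, both have zero length. By the dictionary this happens iff $(i,i+1)$ lies in a common cycle of $\rho_y$ and simultaneously in a common cycle of $\rho_z$, so the number of such adjacencies is exactly $\nu(x)$. I would then invoke non-singularity of $\phi_v$: it forces the two collinear moves to proceed monotonically, for otherwise the curve would retrace itself along a segment and be singular. Consequently their shared vertex genuinely lies in the interior of a single straight segment and may be omitted when forming the lattice knot, so $\nu(x)$ counts exactly the omittable middle vertices created between $x$-moves.

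Finally I would assemble the total. Each omittable vertex is the junction of two moves along one fixed axis, so it is recorded by exactly one of $\nu(x),\nu(y),\nu(z)$, with no overlap; a maximal run of $k+1$ collinear moves contributes its $k$ interior vertices and is counted exactly $k$ times, once per adjacent pair, so the bookkeeping stays additive and $\nu(v) = \nu(x) + \nu(y) + \nu(z)$. The step needing the most care is the $y$-axis count, where the symmetry with $x$ is least transparent: the two pipes separating consecutive $y$-moves $\y_i$ and $\y_{i+1}$ are $\z_i$ and $\x_{i+1}$, corresponding to the two \emph{different} consecutive pairs $(i,i+1)$ in $\rho_z$ and $(i+1,i+2)$ in $\rho_x$, whereas for $x$ (and, after an index shift, for $z$) both separating pipes involve a single pair. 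Verifying that this straddled condition is what $\nu(y)$ records, and that the three axis-counts remain disjoint, is the only genuinely delicate point; the rest is the routine translation carried out above.
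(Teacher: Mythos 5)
The paper states this proposition with no proof at all, so there is nothing to compare your argument against line by line; judged on its own terms, your proof is correct and is clearly the intended one: the dictionary between zero-length pipes and consecutive pairs lying in a common cycle of the relevant $\rho$ (the same computation behind the preceding proposition on $\mu$), followed by the observation that two same-axis pipes become adjacent exactly when the two pipes between them vanish. Your closing remark is the most valuable part, and it is a genuine catch. The pipes separating $\y_i$ from $\y_{i+1}$ are $\z_i$ and $\x_{i+1}$, whose vanishing is governed by the \emph{distinct} pairs $(i,i+1)$ in $\rho_z$ and $(i+1,i+2)$ in $\rho_x$; consequently $\nu(y)$ cannot be obtained from the stated definition of $\nu(x)$ by cyclic symmetry. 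Indeed the symmetric reading (``pairs lying in a common cycle of both $\rho_z$ and $\rho_x$'') falsifies the proposition: take $\rho_x=(2\;3)$, $\rho_y$ trivial, $\rho_z=(1\;2)$. Then $\z_1=\x_2=0$, so $\y_1$ and $\y_2$ give one collinear adjacency, yet no pair of consecutive integers lies in a common cycle of two of the $\rho$'s, so all three symmetric counts vanish. The statement is only true with the straddled definition of $\nu(y)$ that you spell out, and since the paper never defines $\nu(y)$ or $\nu(z)$, this should be made explicit.

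The two points you defer at the end both follow from one observation worth adding: three consecutive zero-length pipes force an intersection of \emph{distant} pipes, hence a singular curve. For instance, if $\y_i=\z_i=\x_{i+1}=0$, then $\x_i$ and $\y_{i+1}$, which are separated by exactly three intervening pipes, both pass through $v_{i+1}$; this is the same reasoning as the paper's remark immediately following the proposition and as Lemma \ref{lem:singinface}. Disjointness of your three axis-counts follows, because an overlap of two of your cases at a single junction is precisely such a triple of consecutive zero pipes; and your monotonicity claim follows as well, since a backtracking collinear pair likewise places an endpoint of a perpendicular neighboring pipe on a distant pipe. Note that this means disjointness genuinely uses non-singularity: for singular $v$ in a cell the right-hand side can overcount the geometric adjacencies, so the proposition should be read as a statement about knots, as its application in Lemma \ref{lem:latbound} confirms.
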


Notice that if $\phi_v$ is a plumbers' knot, the same pair of consecutive integers can never occur in all three of $\rho_x$, $\rho_y$ and $\rho_z$, as this would produce three consecutive zero-length pipes.

\begin{lem} \label{lem:latbound} For $v\in K_n$, $\mu(v) + \nu(v) + \delta_{v_1^y, v_{n-1}^y} \leq 3(n-2)+1$.
\end{lem}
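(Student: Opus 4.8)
```latex
\textbf{Proof proposal.}
The plan is to bound each of the three quantities $\mu(v)$, $\nu(v)$ and the closure-defect term $\delta_{v_1^y,v_{n-1}^y}$ by translating them into statements about consecutive-integer pairs lying in common cycles of $\rho_x,\rho_y,\rho_z$, and then to exploit the one global constraint on a genuine plumbers' \emph{knot}: the same consecutive pair cannot appear in all three of $\rho_x,\rho_y,\rho_z$ simultaneously, as noted just before the statement. First I would set up the counting framework. There are exactly $n-2$ pairs of consecutive integers $\{(i,i+1) : 1 \le i \le n-2\}$ drawn from $\{1,\dots,n-1\}$. By the two preceding propositions, $\mu(v)$ counts, over the three coordinates, how many of these $n-2$ pairs share a cycle, and $\nu(v)$ counts the same pairs but only for the $y$ and $z$ coordinates. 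So each fixed consecutive pair $(i,i+1)$ can contribute to $\mu(v)$ in up to three coordinates, and to $\nu(v)$ in up to two coordinates (the $y$ and $z$ ones).

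The key step is a per-pair accounting. Fix a consecutive pair $(i,i+1)$ and let $c \in \{0,1,2,3\}$ be the number of coordinates among $\{x,y,z\}$ in which $i$ and $i+1$ lie in a common cycle, and $c_{yz}\in\{0,1,2\}$ the number among $\{y,z\}$ alone. The non-singularity hypothesis $v\in K_n$ forces $c \le 2$: if all three held, $\phi_v$ would have three consecutive zero-length pipes and hence a distant-pipe coincidence, contradicting $\phi_v\in K_n$. The combined contribution of this pair to $\mu(v)+\nu(v)$ is $c + c_{yz}$, and since $c\le 2$ and $c_{yz}\le c$, the worst case is $c=2$, $c_{yz}=2$, giving a contribution of at most $4$ per pair; a slightly sharper bookkeeping gives $c+c_{yz}\le 3$ whenever we account correctly for the $x$-contribution, which is what we need. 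I would verify carefully that $c + c_{yz} \le 3$ for every pair: the maximum of $c+c_{yz}$ subject to $c\le 2$ and $c_{yz}\le\min(c,2)$ is attained at $c=2,c_{yz}=1$ or $c=1,c_{yz}=2$, either way totaling $3$ (the value $c=2,c_{yz}=2$ is excluded because if both $y$ and $z$ contribute then $c_{yz}=2$ forces $c\ge 2$, and then the $x$ coordinate cannot also contribute by non-singularity, so $c=2$ exactly, but then the total is $c+c_{yz}=4$ — so this case must be ruled out separately).

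The main obstacle is precisely this last edge case: naively one gets $4$ per pair, which overshoots. The resolution is that $\nu(v)$ does \emph{not} count $x$ at all, so $c_{yz}=2$ means $i,i+1$ share cycles in $y$ and $z$ but the total over $x,y,z$ contributing to $\mu$ is then capped by non-singularity at $c=2$ (the $x$ coordinate is forced \emph{not} to identify the pair). Thus for such a pair $\mu$ receives $2$ and $\nu$ receives $2$, totaling $4$ — so the uniform per-pair bound of $3$ is genuinely false, and summing $3(n-2)$ is not immediate. The correct route is to bound $\mu(v)+\nu(v)$ globally rather than pair-by-pair: $\mu(v)\le 2(n-2)$ because no pair is counted three times, and $\nu(v)\le 2(n-2)$ trivially, but these combine to too much. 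Instead I would observe that $\mu(v) = \mu(\rho_x)+[\mu(\rho_y)+\mu(\rho_z)]$ while $\nu(v)=\nu(\rho_y)+\nu(\rho_z)$ counts the \emph{same} $y,z$-pairs as the last bracket, so $\mu(v)+\nu(v) = \mu(\rho_x) + 2[\mu(\rho_y)+\mu(\rho_z)]$. Now $\mu(\rho_x)\le n-2$, and $\mu(\rho_y)+\mu(\rho_z)\le (n-2) + (\text{number of pairs in neither } y \text{ nor } z)$ via the non-singularity constraint that for each of the $n-2$ pairs at most two of the three coordinates identify it; summing the constraint $c\le 2$ over all $n-2$ pairs gives $\mu(\rho_x)+\mu(\rho_y)+\mu(\rho_z)\le 2(n-2)$, whence $\mu(v)+\nu(v)=\mu(\rho_x)+2(\mu(\rho_y)+\mu(\rho_z)) \le 3(n-2) + (\mu(\rho_y)+\mu(\rho_z)-\mu(\rho_x))$, and a final balancing argument together with the single extra unit from the closure defect $\delta_{v_1^y,v_{n-1}^y}\le 1$ (which adds the one pair $(1,n-1)$ in $y$, contributing at most once) yields the stated bound $3(n-2)+1$. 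I expect the delicate step to be isolating exactly where the ``$+1$'' comes from and confirming the closure segment's zero-length condition is correctly attributed to $\rho_y$ only.
```
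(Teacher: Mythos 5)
Your overall plan --- a per-pair accounting over the $n-2$ consecutive pairs, using non-singularity to forbid a pair from being identified in all three coordinates, plus $\delta_{v_1^y,v_{n-1}^y}\le 1$ for the closure --- is the right one (the paper states this lemma without proof, as an immediate consequence of the two preceding propositions). The gap is that you misread the definition of $\nu$. In the paper, $\nu(x)$ is the number of pairs $(i,i+1)$ lying in a common cycle of \emph{both} $\rho_y$ \emph{and} $\rho_z$ --- geometrically, both pipes $\y_i$ and $\z_i$ vanish, so the segments $\x_i$ and $\x_{i+1}$ merge and the intermediate vertex is omitted --- and $\nu(v)=\nu(x)+\nu(y)+\nu(z)$ is the sum of the three such pairwise-conjunction counts; it is \emph{not} $\mu(\rho_y)+\mu(\rho_z)$. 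Writing $a_p,b_p,c_p\in\{0,1\}$ for whether the pair $p$ lies in a common cycle of $\rho_x,\rho_y,\rho_z$ respectively, the correct identity is
\[
\mu(v)+\nu(v)=\sum_{p}\Bigl[(a_p+b_p+c_p)+(a_pb_p+b_pc_p+c_pa_p)\Bigr],
\]
and non-singularity ($a_p+b_p+c_p\le 2$) makes each summand at most $2+1=3$: the case you feared, ``$\mu$ receives $2$ and $\nu$ receives $2$,'' never occurs, because when exactly two coordinates identify a pair, exactly one of the three products is nonzero, so $\nu$ receives $1$, not $2$. Summing over the $n-2$ pairs gives $3(n-2)$, and $\delta_{v_1^y,v_{n-1}^y}\le 1$ finishes the proof. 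So the per-pair bound of $3$, far from being ``genuinely false,'' is exactly what holds and what the lemma needs.

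Because of the misreading, you abandon this correct route and substitute the identity $\mu(v)+\nu(v)=\mu(\rho_x)+2[\mu(\rho_y)+\mu(\rho_z)]$, which is wrong under the paper's definition, and the concluding ``balancing argument'' is a non-sequitur: from $\mu+\nu\le 3(n-2)+(\mu(\rho_y)+\mu(\rho_z)-\mu(\rho_x))$ you could only deduce the lemma if $\mu(\rho_y)+\mu(\rho_z)\le\mu(\rho_x)$, which fails badly --- for instance in the paper's own extremal example with $\rho_x$ trivial and $\rho_y=\rho_z=(1\,2\,\cdots\,(n\!-\!1))$, where $\mu(\rho_y)+\mu(\rho_z)=2(n-2)$ while $\mu(\rho_x)=0$. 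Indeed, under your reading of $\nu$ that same example would give $\mu+\nu+\delta=4(n-2)+1$, violating the lemma itself; this is the clearest signal that the problem lay in the definition, not in the statement. As written, the proposal does not establish the bound; repaired with the correct $\nu$, your original per-pair scheme goes through in three lines.
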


\begin{defn} Fix a cell $e(\sigma_x, \sigma_y, \sigma_z; \rho_x, \rho_y, \rho_z)$ and let $\phi_v\in K_n$ be the representative knot for $e$. Let $\mu(v)$ and $\nu(v))$ be as above. Define a lattice knot $LK(v)\in Lat_{3n-1-\mu(v) - \nu(v)}$ in the same manner as in Definition \ref{def:plumtolat}, omitting vertices which would coincide or which bound two segments which move along the same coordinate axis.
\end{defn} 

Using this definition, Lemma \ref{lem:latbound} says that the smallest number of segments that can occur in a lattice knot arising as the closure of a plumbers' knot of $n$ moves is 4. Clearly, such a lattice knot is an unknot. This bound is acheived by the representative of the cell $e(12\dots (n\!\!-\!\!1)_x,12\dots (n\!\!-\!\!1)_y,12\dots (n\!\!-\!\!1)_z;(1\;2\dots \;(n\!\!-\!\!1))_y,(1\;2\dots \;(n\!\!-\!\!1))_z)$, for example.

The lattice knots which are produced by plumbers' knots are characterized by one or two segments lying in the z=0 plane and one in the plane with the highest $x$ coordinate. Every lattice knot can be deformed to such, under the appropriate notion of isotopy. It seems likely, therefore, that plumbers' knots will serve as a bridge to allow translation of tools from classical knot theory to be applied to finite complexity knot theory and vice versa.

\subsection{Remarks on cube diagrams}

The cube diagrams of Baldridge and Lowrance \cite{Cube} bear strong resemblance to lattice knots and can be considered as plumbers' knot representatives of particular cells. 

\begin{prop} Let $e=e(\sigma_x, \sigma_y, \sigma_z) \in \Cell_{3n-3}(K_n)$ and $v\in e$. The plumbers' knot $\phi_v$ satisfies the \emph{x-y crossing condition} described in \cite{Cube} if whenever $\sigma_x^{-1}(b+1)$ is between $\sigma_x^{-1}(a)$ and $\sigma_x^{-1}(a+1)$ and $\sigma_y^{-1}(a)$ is between $\sigma_y^{-1}(b)$ and $\sigma_y^{-1}(b+1)$, then $\sigma_z^{-1}(b) > \sigma_z^{-1}(a)$. (Compare to Definition \ref{def:produceint}.)
\end{prop}

This follows immediately from the definitions, and symmetric statements exist for the y-z and z-x crossing conditions. We can consider the subspace of cube knots of $n$ moves, $C_n \subseteq K_n$, generated by such cells, which are precisely the cube knots of grid number $n$. If we do not allow stabilization moves, isotopy of cube knots in each finite space is the same as geometric isotopy through cube knots. Application of the algorithm for classification of plumbers' knots yields that $C_5$ has precisely one cell, a right-handed trefoil, while $C_6$ has 11 components and $C_7$ has 108.

Allowing for the stabilization moves described in \cite{Cube}, the authors prove the following analogue to Theorem \ref{thm:componentscorrect}.

\begin{thm}[\cite{Cube}]
$\pi_0(\underrightarrow\lim C_n) \cong \pi_0(\underrightarrow\lim K_n)$
\end{thm}

As cube diagrams can be used to compute combinatorial knot Floer homology via a modification of the grid diagram algorithm in \cite{CombFloer}, these subspaces $C_n$ are of particular interest. We expect that our development of the combinatorics of plumbers' knots will illuminate computations in the cube diagram chain complex for knot Floer homology. Further, our study of plumbers' knots relationship to finite-type invariants in \cite{UnstabVas} may provide a method of understanding connections between the two theories.

%
%
%
\bibliographystyle{plain}
\bibliography{../central_bibliography}

\end{document}